\DeclareFontShape{T1}{lmr}{bx}{sc} { <-> ssub * cmr/bx/sc }{}
\newcommand{\ri}{{\rm i}}
\newcommand{\rG}{{\rm G}}
\newcommand{\rI}{{\rm I}}
\newcommand{\rV}{{\rm V}}
\newcommand{\rX}{{\rm X}}
\newcommand{\cW}{\mathcal{W}}
\newcommand{\fg}{{\mathfrak g}}
\newcommand{\fh}{{\mathfrak h}}
\newcommand{\fm}{{\mathfrak m}}
\newcommand{\fu}{{\mathfrak u}}
\newcommand{\Z}{\mathbb{Z}}
\newcommand{\R}{\mathbb{R}}
\newcommand{\C}{\mathbb{C}}
\newcommand{\Oc}{\mathbb{O}}
\newcommand{\su}{\mathfrak{su}}
\newcommand{\so}{\mathfrak{so}}
\newcommand{\gl}{\mathfrak{gl}}
\newcommand{\fsl}{\mathfrak{sl}}
\newcommand{\fsp}{\mathfrak{sp}}
\newcommand{\SO}{{\rm SO}}
\newcommand{\Sp}{{\rm Sp}}
\newcommand{\SU}{{\rm SU}}
\newcommand{\Scal}{{\rm Scal}}
\newcommand{\Ad}{\mathrm{Ad}}
\renewcommand{\det}{\mathop\mathrm{det}\nolimits}
\newcommand{\End}{{\mathrm{End}}}
\renewcommand{\epsilon}{\varepsilon}
\newcommand{\Lie}{\mathrm{Lie}}
\newcommand{\ad}{\mathrm{ad}}
\newcommand{\tr}{\mathop{\mathrm{tr}}\nolimits}
\newcommand{\vol}{\mathrm{vol}}
\newcommand{\qandq}{\quad\text{and}\quad}
\newcommand{\qwithq}{\quad\text{with}\quad}
\def\<{\mathopen{}\left<}
\def\>{\right>\mathclose{}}
\def\({\mathopen{}\left(}
\def\){\right)\mathclose{}}
\definecolor{gold}{rgb}{0.85,.66,0}
\definecolor{cherry}{rgb}{0.9,.1,.2}
\definecolor{burgundy}{rgb}{0.8,.2,.2}
\definecolor{orangered}{rgb}{0.85,.3,0}
\definecolor{orange}{rgb}{0.85,.4,0}
\definecolor{olive}{rgb}{.45,.4,0}
\definecolor{lime}{rgb}{.6,.9,0}
\definecolor{green}{rgb}{.2,.7,0}
\definecolor{grey}{rgb}{.4,.4,.2}
\definecolor{brown}{rgb}{.4,.3,.1}
\newtheorem{theorem}{Theorem}[section]
\newtheorem*{theorem*}{Theorem}
\newtheorem{corollary}[theorem]{Corollary}
\newtheorem{definition}[theorem]{Definition}
\newtheorem{example}[theorem]{Example}
\newtheorem{lemma}[theorem]{Lemma}
\newtheorem{remark}[theorem]{Remark}
\newtheorem*{remark*}{Remark}
\newtheorem{proposition}[theorem]{Proposition}
\newcommand{\gt}{\mathrm{G}_2}
\DeclareMathOperator{\Gl}{Gl}
\DeclareMathOperator{\real}{Re}
\DeclareMathOperator{\imag}{Im}
\DeclareMathOperator{\Span}{Span}
\DeclareMathOperator{\sym}{sym}
\DeclareMathOperator{\diver}{div}
\DeclareMathOperator{\ricci}{Ric}
\newcommand{\qforq}{\quad \text{for} \quad}
\newcommand{\qwhereq}{\quad \text{where} \quad}
\title{Harmonic $\gt$-structures on almost Abelian Lie groups}
\author{Andrés J. Moreno}
\email{amoreno@unicamp.br}
\address{IMECC - University of Campinas}
\date{\today}
\begin{document}
\begin{abstract}
    We consider left-invariant $\gt$-structures on $7$-dimensional almost Abelian Lie groups. Also, we characterise the associated torsion forms and the full torsion tensor according to the Lie bracket $A$ of the corresponding Lie algebra. In those terms, we establish the algebraic condition on $A$ for each of the possible $16$-torsion classes of a $\gt$-structure. In particular, we show that four of those torsion classes are not admissible, since $\tau_3=0$ implies $\tau_0=0$. Finally, we use the above results to provide the algebraic criteria on $A$, satisfying the harmonic condition $\diver T=0$, and this allows to identify which torsion classes are harmonic.\\

    \noindent 2020 Mathematical Subject Classification.  53C15, 22E25, 53C30.\\

    \noindent \textit{Keywords:} $\gt$-structures, $\SU(3)$-structures,  harmonic structures, Lie groups and Lie algebras.
\end{abstract}

\maketitle
\tableofcontents

\section{Introduction}

The $8$-dimensional, non-associative, normed algebra of octonions $\Oc$ induces an inner $\langle\cdot ,\cdot\rangle$ and cross $\times$ product on $\R^7$, using the decomposition $\Oc=\real \Oc\oplus \imag\Oc\simeq \R\oplus\R^7$, both products $\langle\cdot,\cdot\rangle$ and $\times$ are defined in terms of the octonionic product \cite{harvey1982calibrated}*{Section IV} by
\begin{equation*}
    \langle u,v\rangle =-\real(uv) \qandq u\times v=\imag(uv), \qforq u,v\in \R^7\simeq\imag\Oc .
\end{equation*}
These yield an alternating $3$-form $\varphi_0\in \Lambda^3(\R^7)^\ast$, given by $\varphi_0(u,v,w)=\langle u\times v,w\rangle$ and characterise the Lie group $\gt\subset \SO(7)$ as the stabiliser subgroup of $\varphi_0$ within $\Gl(\R^7)$ (cf. \cite{joyce2000compact}*{Definition 10.1.1}). For an oriented $7$-manifold $M$, a \emph{$\gt$-structure} $\varphi$ is a differential $3$-form on $M$, such that, for each $p\in M$, there exists an oriented isomorphism between $T_pM$ and $\R^7$ identifying $\varphi_p$ and $\varphi_0$.

The space of $\gt$-structures on $M$ is denoted by $\Omega^3_+\subset\Omega^3(M)$, and a relevant geometric implication is that each $\varphi\in \Omega^3_+$ induces a Riemannian metric, $g_\varphi$ and an orientation, $\vol_\varphi$ on $M$, according to the non-linear relation (for details, see \cite{karigiannis2009flows}):
\begin{equation}\label{eq: varphi_metric_volume_relation}
    (u\lrcorner\varphi)\wedge (v\lrcorner\varphi)\wedge\varphi=6g_\varphi(u,v)\vol_\varphi.
\end{equation}

As a consequence, there is a $\varphi$-dependent Hodge star operator $\ast$ and a Riemannian connection $\nabla$ on $M$. The latter leads us to consider the tensor $\nabla\varphi$, which is named the \emph{intrinsic torsion} of $\varphi$. A $\gt$-structure with vanishing intrinsic torsion is called \emph{torsion-free}, and its metric has holonomy contained in the Lie group $\gt$ and, as a result, the metric is Ricci flat. 
 In \cite{fernandez1982riemannian}, Fernández and Gray proved that torsion-free $\gt$-structures are equivalent with the \emph{closed} $d\varphi=0$ and \emph{coclosed} $d\ast\varphi=0$ conditions.     
The construction of non-trivial, torsion-free $\gt$-structures is a challenging task, and the few  known results, including non-compact explicit examples, have arisen from Lie theory, algebraic geometry, and geometric analysis techniques (e.g. \cites{Bryant1989,Chiossi2002,corti1207g2,Joyce1996,JoyceKarigiannis2017}). In the case of homogeneous spaces $M^7=G/K$, the examples with $G$-invariant torsion-free $\gt$-structures are diffeomorphic to the direct product of a flat torus and an Euclidean space \cite{alekseevskii1975}. Thus, in cases where the torsion-free condition is trivial, it is interesting to consider some relaxed torsion conditions. An approach consists of  considering the $16$-torsion classes, via the so-called \emph{ torsion forms}  $\tau_k\in \Omega^k$ ( $k=0,1,2,3$ ) \cite{bryant2003some}
\begin{equation}\label{eq: torsion forms defi}
    d\varphi=\tau_0\psi+3\tau_1\wedge\varphi+\ast\tau_3 \qquad d\psi=4\tau_1\wedge\psi+\tau_2\wedge\varphi,
\end{equation}
where $\psi=\ast\varphi$. The torsion forms are related with the intrinsic torsion $\nabla_i\varphi_{jkl}=T_{ip}{\psi^p}_{jkl}$, via the \emph{total torsion tensor} (see \cite{karigiannis2009flows}):
\begin{equation*}\label{torsion_tensor_introduction}
    T_\varphi=\frac{\tau_0}{4}g-\frac{1}{4}\jmath(\tau_{3})-\ast(\tau_1\wedge\psi)-\frac{1}{2}\tau_2,
\end{equation*} 
where $\jmath: \Omega^3(M)\rightarrow \sym^2(T^\ast M)$ is defined by $j(\tau)(u,v)=\ast(u\lrcorner\varphi\wedge v\lrcorner\varphi\wedge\tau)$. An alternative class of $\gt$-structures emerges as critical points of the energy functional
\begin{equation}\label{eq: energy functional}
    E(\varphi):=\int_M|T_\varphi|^2\vol,
\end{equation}
restricted to the \emph{isometric class} of $\varphi$, i.e., the subset $[[\varphi]]\subset\Omega^3_+$ with the same induced Riemannian metric and orientation as $\varphi$. The critical points of \eqref{eq: energy functional} are characterised by the divergence-free condition of the corresponding total torsion tensor $\diver T_\varphi:=\nabla^iT_{ij}=0$ and, since they are a particular case of the broad theory of harmonic geometric structure \cite{Gonzalez2009}, they are called \emph{harmonic} $\gt$-structures. Note that the torsion-free $\gt$-structures are global minimisers of the energy functional, in this sense, we can think of the \emph{divergence condition} $\diver T=0$ as a natural generalisation of torsion-free $\gt$-structures. In particular, harmonic $\gt$-structures are stationary points of the corresponding gradient flow 
\begin{equation}\label{isometric_flow}
	    \frac{\partial \varphi_t}{\partial t}=\diver T_t\lrcorner \psi_t, \quad g_{\varphi_t}=g_0 \quad \forall \quad t.
	\end{equation} 
The harmonic condition is transversal to the $16$ torsion classes of $\gt$-structures. However, S. Grigorian proved that some torsion classes have divergence-free torsion tensor \cite{Grigorian2019}*{Theorem 4.3}, namely $\diver T=0$ if one of the following holds:
\begin{align}\label{eq: Grigorian_divergence_free_cases}
\begin{split}
   (\ri)& \text{ $\tau_0$ constant, $\tau_1=0$ and arbitrary $\tau_2$ and $\tau_3$.}\\
   (\ri\ri)& \text{ $\tau_1$ arbitrary and $\tau_0$, $\tau_2$ and $\tau_3$ vanishing.}
\end{split}
\end{align}
For the flow \eqref{isometric_flow},  some analytical properties have been studied, such as short-time existence, uniqueness and compactness of solutions, as well as long-time existence as long as the torsion remains bounded along the flow \cites{ Grigorian2019, Karigiannis2019, loubeau2019}. For the particular case of the $7$-sphere, explicit examples of harmonic $\Sp(2)$-invariant $\gt$-structures were obtained in \cite{loubeau2021}, with non-zero torsion forms $\tau_0$, $\tau_1$, and $\tau_3$, as well as explicit flow solutions of  \eqref{isometric_flow}. And for  a class of solvable Lie groups, in \cite{garrone2022} the author finds new examples of harmonic invariant $\gt$-structures with $\tau_k\neq 0$ for $k=0,1,2,3$. 

In this work, we study harmonic $\gt$-structures on almost Abelian Lie groups, according to their torsion classes. The $n$-dimensional Lie group $G$ is called \emph{almost Abelian} if it admits an Abelian normal subgroup of codimension $1$. Analogously, the corresponding Lie algebra $\fg:=\Lie(G)$ is almost Abelian if it has a codimension $1$ Abelian ideal $\fh$. Its Lie bracket is then completely encoded by $A\in \gl(\fh)$, in other words
\begin{align}\label{eq: almost_abelian_Lie_bracket}
    [e_n, u]=Au \qandq [u,v]=0, \qforq \text{any} \quad u,v\in\fh,
\end{align}
and $e_n\in \fg\backslash \fh$.  When $\dim \fg=7$, the Lie group $G$ admits a $\gt$-structure, induced by the left (or right) Lie group translation of a $\gt$-structure on $\fg$, given by:
\begin{equation}\label{coclosed_almost_abelian_G2}
\varphi=\omega\wedge e^7+\rho^+=e^{127}+e^{347}+e^{567}+e^{135}-e^{146}-e^{245}-e^{236},
\end{equation}
where
$$
\omega=e^{12}+e^{34}+e^{56} \qandq \rho_+=e^{135}-e^{146}-e^{245}-e^{236}
$$
are the canonical $\SU(3)$--structure of $\mathbb{C}^3\simeq \mathfrak{h}$ and $e^{ijk}$ denotes $e^i\wedge e^j\wedge e^k$. Additionally, we have the natural dual $4$-form on $\fg$
\begin{equation}
\psi:=\ast\varphi=\frac{1}{2}\omega^2+\rho_-\wedge e^7=e^{1234}+e^{1256}+e^{3456}-e^{2467}+e^{2357}+e^{1457}+e^{1367},
\end{equation}
where $\rho_-=J^\ast\rho_+$ and $J$ is the canonical almost complex structure on $\mathbb{R}^6$, defined by $\omega:=\langle J\cdot,\cdot\rangle$. It was proved in \cite{Freibert2013}*{Proposition 2.8 (a)} that any $\gt$-structure $\widetilde{\varphi}$ on an almost Abelian Lie algebra $(\fg,A)$ can be written in the form \eqref{coclosed_almost_abelian_G2}, for an \emph{adapted basis} $f_1,\dots,f_7$ of $\fg$ such that $f_7\in\fg\backslash\fh$ and $\fh=\Span(f_1,...,f_6)$, since $\Gl(\fg)_{\widetilde{\varphi}}\simeq \gt$ acts transitively on  one and six-dimensional subspaces of $\fg$. Almost Abelian Lie groups have been studied for a wide variety of geometric structures, namely, Hermitian, symplectic, and Kähler structures, as well as for geometrical problems such as the Ricci flow and the Laplacian (Co-) flow of $\gt$-structures \cites{Arroyo2013,Fino2018,fino2020,Lauret2017,andrada2018}. In particular,   M. Freibert proved that $(\fg,A,\varphi)$ is an almost Abelian Lie algebra with a closed or coclosed $\gt$-structure, if and only if, the bracket $A$ belongs to $\fsl(3,\C)$ or $\fsp(6,\R) $, respectively \cites{Freibert2012,Freibert2013}. 

In Proposition \ref{torsion_forms}, we describe the torsion forms of an almost Abelian Lie algebra with $\gt$-structure $(\fg,\varphi)$ in terms of its Lie bracket $A$. We obtain
	\begin{align}\label{eq: torsion_forms_introduction}
	\begin{split}
	\tau_0&=\frac{2}{7}\tr(JA) \qquad  \tau_2= -\frac{1}{3}\left(\begin{array}{c|c}
	2[J,C_A]-\tr(A)J+3(JA^t+AJ) & -J\alpha^\sharp \\ \hline
	\Big(J\alpha^\sharp\Big)^t & 0
	\end{array}
	\right)\\
	\tau_1&=\frac{1}{12}\alpha-\frac{1}{6}\tr(A)e^7 \qquad \frac{1}{4}\jmath(\tau_3)= \left(\begin{array}{c|c}
	\frac{1}{14}\tr(JA)I_6-\frac{1}{2}[J,S_A] & \frac{1}{4}J\alpha^\sharp \\ \hline
	\frac{1}{4}\Big(J\alpha^\sharp\Big)^t & -\frac{3}{7}\tr(JA)
	\end{array}
	\right),
	\end{split}
	\end{align}
	where $\alpha\in \fh^\ast$ is defined in equation \eqref{alpha_form} and the blocks correspond to the $\SU(3)$-decomposition of $\gl(\R^6)$ \cite{bedulli2007}*{Section 2.1 and 2.3}:
\begin{equation}\label{eq: splitting.gl(6)}
    \gl(\R^6)=\R\cdot I_6\oplus \sym^0_+(\R^6)\oplus \sym^0_-(\R^6)\oplus\R\cdot J\oplus \su(3)\oplus \fm 
\end{equation} 
    with
\begin{align*}
    \sym_-^0=&\{A\in \gl(\R^6); \quad A^t=A \qandq \{J,A\}=0 \} \\
    \sym_+^0=&\{A\in \gl(\R^6); \quad A^t=A,\quad \tr(A)=0 \qandq [J,A]=0\} \\
    \su(3)=&\{A\in \gl(\R^6); \quad A^t=-A,\quad \tr(JA)=0 \qandq [J,A]=0\} \\
    \fm=&\{A\in \gl(\R^6); \quad A^t=-A, \qandq \{J,A\}=0\} .
\end{align*}
In consequence, we obtain the full torsion tensor with respect to the Lie bracket $A\in \gl(\R^6)$
\begin{equation}\label{eq: torsion_tensor_A_intro}
	T= \frac{1}{2}\left(\begin{array}{c|c}
	[J,S(A)]+[J,C(A)]+(JA^t+AJ) & -J\alpha(A)^\sharp \\ \hline
	0 & \tr(JA)
	\end{array}
	\right).
	\end{equation}

As a first application of \eqref{eq: torsion_forms_introduction} in Theorem \ref{thm: 16-torsion classes}, we describe the possible torsion classes of an invariant $\gt$-structure on almost Abelian Lie algebras:

\begin{table}[h!]
\begin{tabular}{||c|c|c||}
\hline 
\text{Class} &  \text{Vanishing torsion} \quad & \text{Bracket relation}    \\ [0.5ex] \hline\hline
    $\cW=\{0\}$ & $\tau_0=0,\tau_1=0,\tau_2=0,\tau_3=0$  &  $A\in\su(3)$ \\ [0.5ex]
    $\cW_4$ & $\tau_0=0,\tau_2=0,\tau_3=0$ &  $A\in\R\cdot I_6\oplus\su(3)$\\ [0.5ex]
    $\cW_2$ & $\tau_0=0,\tau_1=0,\tau_3=0$ & $A\in \sym_+^0\oplus \su(3)$\\ [0.5ex]
    $\cW_3$ & $\tau_0=0,\tau_1=0,\tau_2=0$ & $A\in \sym_-^0\oplus\su(3)$\\ [0.5ex]
    $\cW_1\oplus\cW_3$ & $\tau_1=0,\tau_2=0$ & $A\in \sym_-^0\oplus \R\cdot J\oplus \su(3)$\\ [0.5ex]
    $\cW_2\oplus\cW_4$ & $\tau_0=0,\tau_3=0$ & $A\in\R\cdot I_6\oplus \sym_+^0\oplus\su(3)$ \\ [0.5ex]
    $\cW_3\oplus\cW_4$ & $\tau_0=0,\tau_2=0$ & $A\in\R\cdot I_6\oplus \sym_-^0\oplus \su(3)$ \\ [0.5ex]
    $\cW_2\oplus\cW_3$ & $\tau_0=0,\tau_1=0$ & $A\in \sym_+^0\oplus \sym_-^0\oplus \su(3)$ \\ [0.5ex]
    $\cW_1\oplus\cW_3\oplus\cW_4$ & $\tau_2=0$ & $A\in\R\cdot I_6\oplus \sym_-^0\oplus \R\cdot J\oplus\su(3)$ \\ [0.5ex]
    $\cW_1\oplus\cW_2\oplus\cW_3$ & $\tau_1=0$ &  $A\in \sym_+^0\oplus \sym_-^0\oplus \R\cdot J\oplus\su(3)$ \\ [0.5ex]
    $\cW_2\oplus\cW_3\oplus\cW_4$ & $\tau_0=0$ & $A\in\sym(\R^6)\oplus\fm\oplus \su(3)$ \\ [0.5ex]
    $\cW_1\oplus\cW_2\oplus\cW_3\oplus\cW_4$ & \text{No vanishing condition} & $A\in \gl(\R^6)$ \\ [1ex] \hline
\end{tabular}
\caption{Torsion classes}\label{tb:torsion_classes_introduction}
\end{table}

Finally, using \eqref{eq: torsion_tensor_A_intro}, we express the  divergence-free condition $\diver T=0$ as an algebraic condition on $A\in\gl(\R^6)$ (see Theorem \ref{divergence_vector} or equivalently, \eqref{eq: vector_divergence condition})
\begin{equation*}
        \tr(JA)\tr(A)=0 \qandq -\tr(A)J^\ast\alpha(A)+\theta(C(A))(J^\ast\alpha(A))=0.
    \end{equation*}

Hence, following the notation of Table \ref{tb:torsion_classes_introduction}, we prove in Theorem \ref{thm:harmonic_torsion_classes}, 
that $\varphi$ is harmonic if its torsion belongs to one of the following torsion classes:
\begin{eqnarray*}
     \{0\}, \quad \cW_2, \quad \cW_3, \quad \cW_4,\\ \cW_1\oplus\cW_3, \quad \cW_2\oplus\cW_4, \quad \cW_3\oplus\cW_4, \\ \cW_2\oplus\cW_3, \quad \cW_1\oplus\cW_2\oplus\cW_3. 
\end{eqnarray*}
 Moreover, if $\varphi$ is of type $\cW_1\oplus\cW_3\oplus\cW_4$ and $\diver T=0$, then $\varphi$ is of type $\cW_1\oplus\cW_3$ or $\cW_3\oplus\cW_4$.

\subsection*{Convention:}  Let $(M,g)$ be  a $n$-dimensional Riemannian manifold. 
 A differential $k$-form $\alpha$ on $M$ will be written as
$$
\alpha=\frac{1}{k !} \alpha_{i_1 i_2 \cdots i_k}  dx^{i_1} \wedge  \cdots \wedge dx^{i_k}
$$
in local coordinates $\left(x^1, \ldots, x^n\right)$, where $\alpha_{i_1 i_2 \cdots i_k}$ is completely skew-symmetric in its indices. According to this, for a given coordinate vector field $\partial_m$ and a $k$-form $\alpha$, their interior product is the $(k-1)$-form
$$
\left.\partial_m\right\lrcorner \alpha=\frac{1}{(k-1) !} \alpha_{m i_1 i_2 \cdots i_{k-1}} dx^{i_1} \wedge \cdots \wedge d x^{i_{k-1}}.
$$
Given the endomorphisms $A,B\in \End(TM)$, the corresponding $2$-tensor has coefficients $A_{ij}=A_i^lg_{lj}$, where $g_{lj}$ denotes the coefficients of the metric. Conversely, we have $A_i^l=A_{ij}g^{jl}$ where $g^{jl}$ denotes the coefficients of inverse of the metric $g$. Thus, the product has coefficients 
$$
(AB)_i^k=A^k_jB_i^j \qandq (AB)_{ik}=B_{ij}g^{jl}A_{lk}.
$$

\bigskip

\noindent\textbf{Acknowledgements:}
The author is grateful to the anonymous referees for their valuable review. Also, thanks to Viviana del Barco, Jorge Lauret and Alejandro Tocalchier for their very helpful  comments. This work has been funded by the São Paulo Research Foundation (Fapesp)  \mbox{[2021/08026-5]}.

\section{$\SU(3)$-invariant decomposition}

In this section, we describe the $\SU(3)$-invariant components of $\gl(\R^6)$, in order to derive the formulae \eqref{eq: torsion_forms_introduction} with respect to each $\SU(3)$-irreducible component of the Lie bracket $A\in \gl(\R^6)$, of an almost Abelian Lie algebra. Intending to achieve it, we follow some results from \cites{bedulli2007,fino2017closed} related to $\SU(3)$-structures.

Let $(\omega,\rho^+,h)$ be a $\SU(3)$-structure on $\R^6$, in which $\omega\in \Lambda^2(\R^6)^\ast$ is the fundamental $2$-form compatible with the inner product $h$, i.e., $\omega(\cdot,\cdot)=h(J\cdot,\cdot)$ with $J$ the almost complex structure of $\C^3\simeq \R^6$ and $\rho^+\in \Lambda^3(\R^6)^\ast$ is the real part of the complex volume form $P:=\rho^++iJ^\ast\rho^+=\rho^++i\rho^-$
 on $\C^3$. Moreover, the forms $\omega, \rho^+$ and $P$ are interconnected by the relation:
 \begin{equation}\label{eq: volume6}
     \vol_6=\frac{\omega^3}{6}=\frac{1}{4}\rho^+\wedge\rho^-=\frac{i}{8}P\wedge \bar{P} \qandq \omega\wedge\rho^+=\omega\wedge\rho^-=0,
 \end{equation}
 and the Hodge star operator $\star: \Lambda^k(\R^6)^\ast\rightarrow\Lambda^{6-k}(\R^6)^\ast$ induced by $h$ and $\vol_6$ satisfies:
 \begin{equation}\label{eq: Hodge_star_6}
     \star^2\sigma=(-1)^k\sigma \qandq  \star(\beta\wedge\sigma)=(-1)^k\beta^\sharp\lrcorner\star\sigma, \qforq \sigma\in \Lambda^k(\R^6)^\ast, \quad \beta\in (\R^6)^\ast,
 \end{equation}
 where $\sharp:(\R^6)^\ast\rightarrow \R^6$ is the musical isomorphism induced by $h$, defined by $\beta(u)=h(\beta^\sharp,u)$. As a consequence of the relations \eqref{eq: volume6} and \eqref{eq: Hodge_star_6}, there are the following properties:
 
 \begin{lemma}\cite{fino2017closed}\label{Fino_Lemma}
	Let $(\omega,\rho^+,h)$ be a $\SU(3)$-structure on $\R^6$. For a $1$-form $\beta$, we have the following identities: 
	\begin{enumerate}
		\item[(i)] $\star(\beta\wedge\rho_-)\wedge\omega=J^\ast\beta\wedge\rho_+$ and $\star(\beta\wedge\rho_+)\wedge\omega=-J^\ast\beta\wedge\rho_-$.
		\item[(ii)] $\star(\beta\wedge\rho_-)\wedge\omega^2=\star(\beta\wedge\rho_+)\wedge\omega^2=0$.
		\item[(iii)] $\star(\beta\wedge\rho_-)\wedge\rho_+=-\star(\beta\wedge\rho_+)\wedge\rho_-=\beta\wedge\omega^2=2\star J^\ast\beta$.
		\item[(iv)] $\star(\beta\wedge\rho_-)\wedge\rho_-=\star(\beta\wedge\rho_+)\wedge\rho_+=\star(\beta\wedge\omega)\wedge\omega=-J^\ast\beta\wedge\omega^2=2\star\beta$.
		\item[(v)] $(J^\ast\beta)^\sharp=-J(\beta^\sharp)$ and $(\beta^\sharp\lrcorner\omega)^\sharp=J(\beta^\sharp)$.
	\end{enumerate}	
\end{lemma}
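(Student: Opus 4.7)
The plan is to reduce every identity to the master formula $\star(\beta\wedge\sigma)=(-1)^k\beta^\sharp\lrcorner\star\sigma$ from \eqref{eq: Hodge_star_6}, combined with the $\SU(3)$-compatibility relations $\omega\wedge\rho_\pm=0$, $\star\rho_+=\rho_-$ (so $\star\rho_-=-\rho_+$), $\star\omega=\tfrac12\omega^2$, and the antiderivation rule $\iota_X(\alpha\wedge\gamma)=(\iota_X\alpha)\wedge\gamma+(-1)^{|\alpha|}\alpha\wedge(\iota_X\gamma)$.

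First I would dispose of (v), which is purely algebraic. Writing $X=\beta^\sharp$, the compatibility $\omega(u,v)=h(Ju,v)$ and the skew-symmetry $h(JX,Y)+h(X,JY)=0$ give $(J^\ast\beta)(Y)=\beta(JY)=-h(JX,Y)$ and $(X\lrcorner\omega)(Y)=h(JX,Y)$, so $(J^\ast\beta)^\sharp=-JX$ and $(X\lrcorner\omega)^\sharp=JX$; in particular $\iota_X\omega=-J^\ast\beta$. As a corollary, applying the master formula to $\gamma\wedge\omega^2$ together with $\star\omega^2=2\omega$ and $\iota_{\gamma^\sharp}\omega=-J^\ast\gamma$ yields the auxiliary identity $\gamma\wedge\omega^2=2\star J^\ast\gamma$ for every $1$-form $\gamma$, which already covers the third equalities in (iii) and (iv).

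Next I record the three intermediate formulas
$$
\star(\beta\wedge\rho_-)=\beta^\sharp\lrcorner\rho_+,\quad \star(\beta\wedge\rho_+)=-\beta^\sharp\lrcorner\rho_-,\quad \star(\beta\wedge\omega)=\tfrac12\,\beta^\sharp\lrcorner\omega^2,
$$
each by direct substitution in the master formula. Then (ii) is immediate: $\omega^2\wedge\rho_\pm=0$ combined with Leibniz and $\iota_X\omega^2=2\omega\wedge\iota_X\omega$ gives $(\iota_X\rho_\pm)\wedge\omega^2=\rho_\pm\wedge\iota_X\omega^2=2(\rho_\pm\wedge\omega)\wedge\iota_X\omega=0$. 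For (i), applying Leibniz to $\iota_X(\rho_\pm\wedge\omega)=0$ produces $(\iota_X\rho_\pm)\wedge\omega=\rho_\pm\wedge\iota_X\omega=-\rho_\pm\wedge J^\ast\beta$, and graded commutation of a $3$-form past a $1$-form yields the stated equalities.

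Finally, for (iii) and (iv), which involve the four products $(\iota_X\rho_{\pm})\wedge\rho_{\pm}$, the cleanest route exploits the holomorphic $(3,0)$-form $P=\rho_++i\rho_-$. Since there are no nonzero $(5,0)$-forms on $\mathbb{C}^3$, one has $(\iota_XP)\wedge P=0$; matching real and imaginary parts yields $(\iota_X\rho_+)\wedge\rho_+=(\iota_X\rho_-)\wedge\rho_-$ and $(\iota_X\rho_+)\wedge\rho_-=-(\iota_X\rho_-)\wedge\rho_+$. Combined with the Leibniz expansion $(\iota_X\rho_+)\wedge\rho_--(\iota_X\rho_-)\wedge\rho_+=4\star\beta$ of $\iota_X(\rho_+\wedge\rho_-)=\iota_X(4\vol_6)=4\star\beta$, this pins down $(\iota_X\rho_+)\wedge\rho_-=2\star\beta$ and $(\iota_X\rho_-)\wedge\rho_+=-2\star\beta$. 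Substituting $X\mapsto JX$ and invoking $\iota_{JX}\rho_+=-\iota_X\rho_-$ (which drops out of $\iota_{JX}P=i\iota_XP$) then gives $(\iota_X\rho_\pm)\wedge\rho_\pm=2\star J^\ast\beta$; translating back through the intermediate identities produces both (iii) and (iv). The main obstacle throughout is careful sign-bookkeeping from the $(-1)^k$ in the master formula, the antiderivation rule, and graded commutativity of $3$-forms; beyond that, everything reduces to direct substitution.
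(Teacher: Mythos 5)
The paper offers no proof of this lemma --- it is imported verbatim from the cited reference --- so the only question is whether your self-contained derivation is sound, and it is. I checked the sign bookkeeping throughout: the master formula gives $\star(\beta\wedge\rho_-)=\beta^\sharp\lrcorner\rho_+$, $\star(\beta\wedge\rho_+)=-\beta^\sharp\lrcorner\rho_-$, $\star(\beta\wedge\omega)=\tfrac12\beta^\sharp\lrcorner\omega^2$ as you claim; the Leibniz arguments for (i) and (ii) are correct; the auxiliary identity $\gamma\wedge\omega^2=2\star J^\ast\gamma$ follows from $\star(\gamma\wedge\omega^2)=-2J^\ast\gamma$ after one more application of $\star$ (with the $(-1)^5$ from $\star^2$ on $5$-forms, which you elide but which works out); and the $(5,0)$-form argument, combined with $\iota_X(\rho_+\wedge\rho_-)=4\star\beta$ and the substitution $X\mapsto JX$ via $\iota_{JX}P=i\,\iota_XP$, correctly pins down all four quadratic products $(\iota_X\rho_\pm)\wedge\rho_\pm$ and $(\iota_X\rho_\pm)\wedge\rho_\mp$, yielding (iii) and (iv). Two small points worth making explicit in a full write-up: first, you use the normalisations $\star\rho_+=\rho_-$ (hence $\star\rho_-=-\rho_+$) and $\star\omega=\tfrac12\omega^2$, which are not among the relations \eqref{eq: volume6}--\eqref{eq: Hodge_star_6} quoted in the paper and should be verified once in the adapted basis (they do hold with the paper's conventions $\rho_-=J^\ast\rho_+$, $\omega=\langle J\cdot,\cdot\rangle$); second, the claim $\iota_{JX}P=i\,\iota_XP$ rests on $P$ being of type $(3,0)$, i.e.\ complex-linear in each argument, which deserves a sentence. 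With those two remarks added, the argument is complete and is arguably more systematic than the usual coordinate verification, since everything is funnelled through one contraction formula and one type consideration.
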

Most of the later computations are addressed in coordinates, hence it will be convenient to express $\omega,\rho^+$ and $\rho^-$ as:
 \begin{equation}\label{eq: SU3_structure_coordinates}
     \omega=\frac{1}{2}\omega_{ij}e^{ij}=\frac{1}{2}J^k_{i}h_{kj}e^{ij},\quad \rho^+=\frac{1}{6}\rho^+_{ijk}e^{ijk} \qandq \rho^-=\frac{1}{6}\rho^-_{ijk}e^{ijk},
 \end{equation}
 where $\rho_{ijk}=\rho(e_i,e_j,e_k)$ for $e_1,...,e_6$ a basis of $\R^6$. Thus, according to \eqref{eq: SU3_structure_coordinates}, there hold the following contractions: 
 \begin{lemma}\cite{bedulli2007}*{Section 2.2}\label{SU3_identities}
 Let $e_1,...,e_6$ be a basis of $\R^6$ and $(\omega,\rho^+,h)$ a $\SU(3)$-structure as given in \eqref{eq: SU3_structure_coordinates}. Then, the following identities hold: 
   \begin{align*}
\begin{split}
&\omega_{ip}h^{pq}\omega_{qj}=-h_{ij}\\
&\rho^+_{ijk}\rho^+_{abc}h^{kc}=-\omega_{ia}\omega_{jb}+\omega_{ib}\omega_{ja}+h_{ia}h_{jb}-h_{ja}h_{ib}=	\rho^-_{ijk}\rho^-_{abc}h^{kc}\\
&\rho^+_{ijk}\rho^+_{abc}h^{jb}h^{kc}=4h_{ia}=\rho^-_{ijk}\rho^-_{abc}h^{jb}h^{kc}\\
&\rho^-_{ijk}\rho^+_{abc}h^{kc}=-\omega_{ia}h_{jb}+\omega_{ja}h_{ib}+\omega_{ib}h_{ja}-\omega_{jb}h_{ia}\\
&\rho^+_{ijk}\rho^-_{abc}h^{jb}h^{kc}=4\omega_{ia}\\
&\rho^+_{ijp}h^{pq}\omega_{qk}=\rho^-_{ijk}, \quad \rho^-_{ijp}h^{pq}\omega_{qk}=-\rho^+_{ijk},\quad \rho^+_{ijk}\omega_{bc}h^{jb}h^{kc}=0.
\end{split}
\end{align*}  
 \end{lemma}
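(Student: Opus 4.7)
All of the asserted identities are tensorial and $\SU(3)$-invariant, so the plan is to reduce to a single adapted orthonormal basis and verify each formula there. Concretely, since by definition of an $\SU(3)$-structure the stabiliser of $(\omega,\rho^+,h)$ in $\GL(6,\R)$ is conjugate to $\SU(3)$, I would fix the canonical basis $e_1,\dots,e_6$ of $\R^6\cong\C^3$ in which
$$
h=\sum_{i=1}^6 (e^i)^2,\qquad \omega=e^{12}+e^{34}+e^{56},\qquad \rho^+=e^{135}-e^{146}-e^{245}-e^{236},
$$
and $J$ sends $e_{2k-1}\mapsto e_{2k}$, $e_{2k}\mapsto -e_{2k-1}$. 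In this basis $h_{ij}=\delta_{ij}$, so raising indices is trivial, and $\rho^-=J^{\ast}\rho^+$ is computed once and for all as an explicit combination of $\pm e^{ijk}$.

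The first identity $\omega_{ip}h^{pq}\omega_{qj}=-h_{ij}$ is the coordinate expression of $J^2=-I$ via $\omega_{ij}=J^k{}_i h_{kj}$, and is immediate on the $2\times 2$ blocks. For identity (ii), I would observe that both sides of the equality are $\SU(3)$-invariant tensors of type $(0,4)$ which are skew in $(i,j)$ and in $(a,b)$, and invariant under the simultaneous swap $(ij)\leftrightarrow(ab)$. Because $\rho^+$ has only four nonzero components up to skew-symmetry, the left-hand side reduces to a short case analysis in which one distinguishes whether the pair $\{i,j\}$ and $\{a,b\}$ lie within a single $J$-block $\{e_{2k-1},e_{2k}\}$ or across different blocks; in each case both sides produce the same signs. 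Identity (iii) follows from (ii) by the contraction $h^{jb}$, using (i) to reduce $\omega_{jp}h^{pq}\omega_{qb}h^{jb}$ and $h_{jb}h^{jb}=6$, so the right-hand side collapses to $4h_{ia}$.

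For the identities involving $\rho^-$, the main input is the first relation in (vi), namely $\rho^+_{ijp}h^{pq}\omega_{qk}=\rho^-_{ijk}$; this is a direct coordinate rewriting of $\rho^-=J^{\ast}\rho^+$ on the third slot and is checked directly from the explicit formulae for $\rho^\pm$. The second relation in (vi) then follows by contracting once more with $\omega$ and using (i), which gives a factor $-I$; the third relation in (vi) expresses that $\omega$ is of type $(1,1)$ while $\rho^+$ is of type $(3,0)+(0,3)$, and it translates into the vanishing of a totally antisymmetric contraction that one verifies directly on the four nonzero components of $\rho^+$. Once (vi) is in hand, identities (iv) and (v) are obtained from (ii) and (iii) by contracting one slot of $\rho^+$ with $\omega$ via (vi); for instance, $\rho^-_{ijk}\rho^+_{abc}h^{kc}=\rho^+_{ijp}h^{pq}\omega_{qk}\rho^+_{abc}h^{kc}$, and substituting (ii) into the right-hand side and simplifying with the relation $\omega_{ia}\omega_{jb}-\omega_{ib}\omega_{ja}=\ldots$ yields the four-term formula in (iv). Identity (v) follows from (iv) by the contraction $h^{jb}$ together with (i).

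The only genuine obstacle is sign bookkeeping: there is a choice of convention for $\rho^-=J^{\ast}\rho^+$ (one may act by $J^{\ast}$ on one or three slots, with a resulting $\pm$ ambiguity), and the relative signs in (ii), (iv) and (vi) depend on this choice. I would therefore pin down the sign once by evaluating $\rho^-$ directly from the equality $\rho^++i\rho^-=P$ for the complex volume form $P=(e^1+ie^2)\wedge(e^3+ie^4)\wedge(e^5+ie^6)$, and then propagate this choice consistently through all remaining identities.
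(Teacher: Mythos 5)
Your overall strategy is sound and is essentially the only natural one: the paper itself offers no proof of this lemma, citing it verbatim from Bedulli--Vezzoni (Section 2.2), where the identities are likewise obtained by direct computation in an adapted frame. Since every expression appearing on either side is a component of a tensor built functorially from $(\omega,\rho^+,h)$, and since by definition an $\SU(3)$-structure admits a basis in which the triple takes the standard form, your reduction to the canonical orthonormal basis is legitimate, and the case-by-case verification on the four nonzero components of $\rho^\pm$ (with the sign of $\rho^-$ pinned down by $P=(e^1+ie^2)\wedge(e^3+ie^4)\wedge(e^5+ie^6)$) settles everything.

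One execution detail deserves care: your proposed derivation of (iv) from (ii) via the substitution $\rho^-_{ijk}=\rho^+_{ijp}h^{pq}\omega_{qk}$ is circular as written, because it produces the contraction $\rho^+_{ijp}\rho^+_{abc}\,h^{pq}\omega_{qk}h^{kc}$ of the two third slots \emph{through} $J$, which is not the contraction through $h$ appearing in (ii) --- it is exactly the quantity you are trying to evaluate. The fix is to use instead that $\rho^+$ is of type $(3,0)+(0,3)$, so $J$ may be moved to a slot not involved in the contraction, e.g.\ $\rho^-_{ijk}=-J^l_{\,i}\rho^+_{ljk}$; substituting this into $\rho^-_{ijk}\rho^+_{abk}$ and applying (ii) together with $\omega_{la}J^l_{\,i}=-h_{ia}$ yields the four-term right-hand side of (iv) directly. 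Alternatively, your fallback of verifying (iv) by the same short case analysis used for (ii) works without further ado, so this is a repairable slip rather than a gap in the method.
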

With respect to the $\SU(3)$-decomposition \eqref{eq: splitting.gl(6)}, we can write each $A\in \gl(\R^6)$ as:
\begin{equation*}
    A=\frac{\tr(A)}{6}I_6+S_+(A)+S_-(A)+\frac{\tr(JA)}{6}J+C_+(A)+C_-(A),
\end{equation*}
where,
\begin{align*}
     S_+(A)=&-\frac{1}{2}J\{S(A),J\}-\frac{\tr(A)}{6}I_6\in \sym^0_+(\R^6)\\
      S_-(A)=&\quad \frac{1}{2}J[S(A),J]\in \sym^0_-(\R^6) \qwithq S(A)=\frac{1}{2}(A+A^t)\\
      C_+(A)=&-\frac{1}{2}J\{C(A),J\}-\frac{\tr(JA)}{6}J\in \su(3)\\
     C_-(A)=&\quad \frac{1}{2}J[C(A),J]\in \fm \qwithq C(A)=\frac{1}{2}(A-A^t),
\end{align*}
here $[J,A]=JA-AJ$ and $\{J,A\}=JA+AJ$ denote the commutator and the anti-commutator, respectively. Using the Hermitian inner product $h$ on $\R^6$ (i.e. $h(J\cdot,J\cdot)=h(\cdot,\cdot)$), we can identify $\gl(\R^6)$ with bilinear forms on $\R^6$ and consequently, the splitting \eqref{eq: splitting.gl(6)} becomes:
\begin{equation}\label{eq: splitting.bilinear.forms}
    \R^6\otimes (\R^6)^\ast\simeq \R\cdot h\oplus S^2_+\oplus S^2_-\oplus \Lambda^2_1\oplus\Lambda^2_6\oplus \Lambda^2_8
\end{equation}
in which, 
\begin{align*}
    S^2_+=&\{\alpha \in S^2(\R^6)^\ast; \quad J^\ast\alpha=\alpha \qandq \tr_h(\alpha)=0 \}\\
    S^2_-=&\{\alpha \in S^2(\R^6)^\ast; \quad J^\ast\alpha=-\alpha \}\\ \Lambda^2_1=&\R\cdot \omega\\
    \Lambda^2_8=&\{\alpha \in \Lambda^2(\R^6)^\ast; \quad J^\ast\alpha=\alpha \qandq \alpha\wedge\omega^2=0 \}\\
    =&\{\alpha\in \Lambda^2(\R^6)^\ast; \quad \alpha\wedge\rho_+=0 \qandq J^\ast\alpha=-\star(\alpha\wedge\omega)\}\\
    \Lambda^2_6=&\{\alpha \in \Lambda^2(\R^6)^\ast; \quad J^\ast\alpha=-\alpha \}\\
    =&\{u\lrcorner\rho_++Ju\lrcorner\rho_-; \quad u\in \R^6\}.
\end{align*}
Where $\star:\Lambda^k(\R^6)^\ast\rightarrow \Lambda^{6-k}(\R^6)^\ast$ is the Hodge star operator. From \eqref{eq: splitting.gl(6)} and \eqref{eq: splitting.bilinear.forms}, we have the identification via the inner product $h$
\begin{align*}
    \sym^0_\pm(\R^6)\simeq S_\pm^2, \quad \su(3)\simeq\Lambda^2_8 \qandq \fm\simeq \Lambda^2_6.
 \end{align*}
 In general, consider the infinitesimal $\gl(\R^n)$-representation $\theta: \gl(\R^n)\rightarrow \End(\Lambda^k(\R^n)^\ast)$ defined by:
 \begin{align}\label{eq: theta_representation}
 \begin{split}
     \theta(B)\gamma:=&\frac{d}{dt}|_{t=0}(e^{-Bt})^\ast\gamma=-\gamma(B\cdot,...,\cdot)-....-\gamma(\cdot,....,B\cdot)\\
     =&-\frac{1}{(k-1)!}B^l_{i_1}\gamma_{li_2\cdots i_k}e^{i_1\cdots i_k} \qwhereq \gamma\in\Lambda^k(\R^n)^\ast.
\end{split}
 \end{align}
Regarding \eqref{eq: splitting.gl(6)} and \eqref{eq: theta_representation}, we have the following properties:

\begin{lemma}\label{lem: theta_omega}
Let $(\omega,\rho^+,h)$ be a $\SU(3)$-structure on $\R^6$ and consider the induced maps 
$$
\theta(\quad)\omega: \gl(\R^6)\rightarrow \Lambda^2(\R^6)^\ast \quad \theta(\quad)\rho^+: \gl(\R^6)\rightarrow \Lambda^3(\R^6)^\ast,
$$
then we have that:
\begin{align*}
\theta(\R\cdot I_6)\omega&=\R\cdot \omega,\\  \theta(\R\cdot J)\omega&=\theta(\sym^0_-)\omega= \theta(\su(3))\omega=\{0\},\\ \theta(\sym^0_+)\omega&\subseteq \Lambda^2_8 \qandq \theta(\fm)\omega\subseteq \Lambda^2_6,
\end{align*}
and
\begin{equation*}
    \theta(A)\rho^+=\theta(JA)\rho^-.
\end{equation*}
\end{lemma}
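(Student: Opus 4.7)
The plan is to verify each claim by direct computation from the definition
\begin{equation*}
(\theta(B)\omega)(u,v) = -\omega(Bu,v) - \omega(u,Bv), \qquad \omega(u,v) = h(Ju,v),
\end{equation*}
using the characterising properties of each $\SU(3)$-invariant summand of \eqref{eq: splitting.gl(6)}. The unifying observation is that whenever $B^t = \pm B$ together with either $[J,B]=0$ or $\{J,B\}=0$, moving $B$ from the first argument of $\omega$ to the second produces a definite sign, fully determined by the transposition behaviour of $B$ and by its (anti-)commutation with $J$. For $B = I_6$ one reads off $\theta(I_6)\omega = -2\omega$, giving (1). For $B \in \sym^0_- \oplus \su(3)$, and also for $B = J$ (using $h(Ju,Jv) = h(u,v)$), this sign is $-1$, so $\theta(B)\omega = 0$, proving (2). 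For $B \in \sym^0_+$ and for $B \in \fm$ the sign is $+1$, whence $\theta(B)\omega(u,v) = -2h(JBu,v)$; the $J$-invariance of this $2$-form follows directly from $[J,B]=0$ in the first case, while its $J$-anti-invariance comes from $\{J,B\}=0$ in the second, thereby settling the inclusion $\theta(\fm)\omega \subseteq \Lambda^2_6$ of (4).

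For (3) the second defining condition of $\Lambda^2_8$, namely $\theta(B)\omega \wedge \omega^2 = 0$, still needs checking. The cleanest route is the Leibniz identity for the derivation $\theta(B)$,
\begin{equation*}
\theta(B)(\omega^3) = 3\,\theta(B)\omega \wedge \omega^2,
\end{equation*}
combined with the top-form identity $\theta(B)\vol_6 = -\tr(B)\vol_6$ (from $\det e^{-tB} = e^{-t\tr B}$) and $\omega^3 = 6\vol_6$. Together these yield $\theta(B)\omega \wedge \omega^2 = -\tfrac{\tr(B)}{3}\omega^3$, which vanishes on $\sym^0_+$ by tracelessness, completing (3).

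For the final identity $\theta(A)\rho^+ = \theta(JA)\rho^-$, I would exploit that $P := \rho^+ + i\rho^-$ is a $(3,0)$-form on $\C^3 \simeq \R^6$, so $P(Jx,\cdot,\cdot) = i\,P(x,\cdot,\cdot)$ in each argument. Separating real and imaginary parts gives $\rho^-(Jx,\cdot,\cdot) = \rho^+(x,\cdot,\cdot)$ (and $\rho^+(Jx,\cdot,\cdot) = -\rho^-(x,\cdot,\cdot)$); substituting the former into each of the three terms of the expansion $\theta(JA)\rho^- = -\sum_i \rho^-(\ldots, JAv_i, \ldots)$ reproduces $\theta(A)\rho^+$ termwise. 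The main obstacle is purely sign bookkeeping across the several components, but once the uniform pattern $\omega(Bu,v) = \epsilon\,\omega(u,Bv)$, with $\epsilon$ read off from the transposition type of $B$ and its $J$-(anti-)commutation, is tabulated, every case collapses to a one-line verification.
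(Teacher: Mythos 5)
Your proposal is correct and follows essentially the same route as the paper: a case-by-case verification from the definition of $\theta$, with the sign of $\omega(Bu,v)=\pm\,\omega(u,Bv)$ determined by the transposition type of $B$ and its (anti-)commutation with $J$, the derivation identity $\theta(B)\omega^3=3\,\theta(B)\omega\wedge\omega^2$ together with tracelessness for the $\sym^0_+$ case, and the single-slot relation between $\rho^+$ and $\rho^-$ under $J$ for the final identity. The only cosmetic differences are that the paper disposes of $\su(3)$ by citing $\su(3)\subset\fsl(3,\C)\simeq\mathfrak{stab}(\omega)$ rather than recomputing the sign, and proves $\theta(A)\rho^+=\theta(JA)\rho^-$ via the index identity $\rho^+_{ijp}h^{pq}\omega_{qk}=\rho^-_{ijk}$ of Lemma \ref{SU3_identities} instead of your equivalent coordinate-free use of the $(3,0)$-form $P$.
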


\begin{proof}
 The relations $\theta(\R\cdot I_6)\omega=\R\cdot\omega$ and $\theta(R\cdot J)\omega=\{0\}$ follow from the definition of \eqref{eq: theta_representation}. Also, $\theta(\su(3))\omega=\{0\}$ holds, since $\su(3)\subset\fsl(3,\C)\simeq \mathfrak{stab}(\omega)$, i.e., the Lie algebra stabiliser of $\omega$. Now, if $B\in \sym_-^0$, we have:
 \begin{align*}
     \theta(B)\omega (u,v)&=-\omega(Bu,v)-\omega(u,Bv)\\
     &=-h(JBu,v)-h(BJu,v)\\
     &=-h(\{J,B\}u,v)=0.
 \end{align*}
 Now, for $B\in \sym_+^0$, it is easy to check that $J^\ast(\theta(B)\omega)=\theta(B)\omega$, and also 
\begin{align*}
    \theta(B)\omega\wedge \omega^2=\frac{1}{3}\theta(B)\omega^3=2\theta(B)\vol_6=2\tr(B)\vol_6=0,
\end{align*}
consequently, $\theta(B)\omega\in \Lambda^2_8$. 
By a similar computation, we have $J^\ast\theta(B)\omega=-\theta(B)\omega$ for $B\in \fm$. Finally, using the definition \eqref{eq: theta_representation} and Lemma \ref{SU3_identities}, we have
\begin{align*}
    \theta(A)\rho^+=&-\frac{1}{2}A^l_{i}\rho^+_{ljk}e^{ijk}=\frac{1}{2}A^l_{i}\rho^-_{jkp}h^{pq}\omega_{ql}e^{ijk}\\
    =&-\frac{1}{2}J^p_lA^l_i\rho^-_{pjk}e^{ijk}=\theta(JA)\rho^-
\end{align*}
\end{proof}

Consider the map $\alpha:\gl(\R^6)\rightarrow \Lambda^1(\R^6)^\ast$ defined by 
\begin{align}\label{alpha_form}
	\alpha(A):=&\star(\omega\wedge\theta(A^t)\rho_-)=-\star(\theta(A^t)\omega\wedge\rho_-).
\end{align}
From Lemma \ref{lem: theta_omega}, we see that $\ker\alpha=\sym(\R^6)\oplus \fu(3)$. Hence, the $1$-form \eqref{alpha_form} depends uniquely from the $\fm$-part of $A$, we can see this explicitly as follows: consider the $k$-form
 $\gamma\in\Lambda^k(\R^6)^\ast$, given by $\gamma=\frac{1}{k!}\gamma_{i_1\cdots i_k}e^{i_1\cdots i_k}$, in which $\gamma_{i_1\cdots i_k}=\gamma(e_{i_1},...,e_{i_k})$, and define  the contraction of $\gamma$ by $A\in \gl(\R^6)$ as
\begin{equation*}
    A\lrcorner\gamma=\frac{1}{(k-2)!}A_{ij}h^{ip}h^{jq}\gamma_{pqi_1\cdots i_{k-2}}e^{i_1\cdots i_{k-2}}, \qforq k\geq 2,
\end{equation*}
notice that, if $A$ is symmetric, then  $A\lrcorner\gamma$ vanishes.

\begin{lemma}\label{alpha_representation}
    The $1$-form \eqref{alpha_form} satisfies:
    \begin{equation*}
        \alpha(A)=\frac{1}{2}[J,C(A)]\lrcorner\rho_+=JC_-(A)\lrcorner\rho_+.
    \end{equation*}
    Also, for the dual vector, $\alpha(A)^\sharp$ holds
    \begin{equation*}
        \alpha(A)^\sharp\lrcorner\rho_+(u,v)=2h([J,C(A)]u,v) \qandq \alpha(A)^\sharp\lrcorner\rho_-(u,v)=2h(J[J,C(A)]u,v)
    \end{equation*}
    for any $u,v\in\R^6$.
\end{lemma}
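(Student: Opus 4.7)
\bigskip
\noindent\textbf{Proof plan.}
The plan is to reduce the identity to the case $A \in \fm$ via the $\SU(3)$-decomposition \eqref{eq: splitting.gl(6)}, and then to verify it by a direct coordinate computation using Lemma \ref{SU3_identities}. Writing $C(A) = C_+(A) + C_-(A)$ with $C_+(A) \in \su(3)$ commuting with $J$ and $C_-(A) \in \fm$ anti-commuting with $J$, one obtains $[J, C(A)] = [J, C_-(A)] = 2JC_-(A)$, which immediately gives the second equality $\tfrac{1}{2}[J, C(A)] \lrcorner \rho_+ = JC_-(A) \lrcorner \rho_+$. Both sides of the main identity are linear in $A$ and vanish on $\sym(\R^6) \oplus \fu(3) = \R\cdot I_6 \oplus \sym_+^0 \oplus \sym_-^0 \oplus \R\cdot J \oplus \su(3)$: the right-hand side since $C_-$ projects onto $\fm$, and the left-hand side as stated preceding the lemma. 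Indeed, by Lemma \ref{lem: theta_omega}, $\theta(A^t)\omega = 0$ for $A \in \R\cdot J \oplus \sym_-^0 \oplus \su(3)$, while for $A \in \R\cdot I_6 \oplus \sym_+^0$ one has $\theta(A^t)\omega \in \R\cdot\omega \oplus \Lambda^2_8 \subset \Lambda^{1,1}$, which wedges trivially with the $(3,0)+(0,3)$-form $\rho_-$. Hence it suffices to prove the identity for $A = B \in \fm$.

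For $B \in \fm$, using $B^t = -B$ and $BJ = -JB$, a direct computation gives
\[
\theta(B)\omega(u,v) = -\omega(Bu, v) - \omega(u, Bv) = -h(JBu, v) + h(BJu, v) = -2h(JBu, v),
\]
so $\theta(B)\omega$ is the antisymmetric $2$-form corresponding to $-2JB$; note $JB \in \fm$ since $(JB)^t = -JB$ and $\{J, JB\} = 0$. Therefore
\[
\alpha(B) = \star(\theta(B)\omega \wedge \rho_-) = -2\,\star\!\big((JB)^\flat \wedge \rho_-\big),
\]
and the main identity reduces to the algebraic claim
\[
-2\,\star(M^\flat \wedge \rho_-) = M \lrcorner \rho_+ \qquad \text{for every } M \in \fm,
\]
applied with $M = JB$. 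I would establish this by writing both sides in an orthonormal basis and invoking the contraction
\[
\rho^-_{ijk}\rho^+_{abc}h^{kc} = -\omega_{ia}h_{jb} + \omega_{ja}h_{ib} + \omega_{ib}h_{ja} - \omega_{jb}h_{ia}
\]
from Lemma \ref{SU3_identities}, together with $\{J, M\} = 0$ and $M^t = -M$.

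For the dual-vector identities, I would set $N := \tfrac{1}{2}[J, C(A)] = JC_-(A) \in \fm$, so that $\alpha(A) = N \lrcorner \rho_+$, and recast the two claims as
\[
(N \lrcorner \rho_+)^\sharp \lrcorner \rho_+(u,v) = 4\,h(Nu, v), \qquad (N \lrcorner \rho_+)^\sharp \lrcorner \rho_-(u,v) = 4\,h(JNu, v),
\]
for $N \in \fm$ (using $JN \in \fm$). These follow by the analogous coordinate computation, this time using the contractions $\rho^+_{ijk}\rho^+_{abc}h^{kc}$ and $\rho^-_{ijk}\rho^+_{abc}h^{kc}$ of Lemma \ref{SU3_identities}. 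Substituting back $[J, C(A)] = 2N$ converts the prefactor $4h(N\cdot, \cdot)$ into $2h([J, C(A)]\cdot, \cdot)$, and likewise for the $\rho_-$ statement, matching the expressions in the claim.

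The main obstacle will be the sign- and factor-bookkeeping in the four-term $\rho\rho$-contractions of Lemma \ref{SU3_identities}, distributed over the antisymmetrizations inherent in the wedge products and in the definition of $M \lrcorner \gamma$. A reliable sanity check is to evaluate both sides on a single test element, e.g., $M = e^{13} - e^{24} \in \Lambda^2_6$ (whose underlying matrix satisfies $Me_1 = e_3$, $Me_2 = -e_4$, $Me_5 = Me_6 = 0$), and confirm that the constants $-2$ and $4$ indeed emerge correctly.
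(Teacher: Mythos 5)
Your plan follows the paper's proof essentially step for step: the paper likewise first uses Lemma \ref{lem: theta_omega} (applied to $A^t$) to reduce $\alpha(A)$ to its dependence on the $\fm$-component, computes $\theta(C_-(A))\omega(u,v)=-2h(JC_-(A)u,v)$, and then derives both the first identity and the two dual-vector identities by exactly the coordinate contractions of Lemma \ref{SU3_identities} together with the Hodge-star rules \eqref{eq: Hodge_star_6}, arriving at the same constants. The only (cosmetic) difference is that for the first identity the paper rewrites $\star(e^i\wedge e^j\wedge\rho_-)$ as $h^{ip}h^{jq}\,e_p\lrcorner e_q\lrcorner\rho_+$ directly instead of routing through the $\rho^-\rho^+$ contraction you cite.
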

\begin{proof}
    Using the decomposition \eqref{eq: splitting.gl(6)} and Lemma \ref{lem: theta_omega} to the matrix $A^t$, we have $\alpha(A)=\star(\theta(C_-(A))\omega\wedge\rho_-)$. Note that $\theta(C_-(A))\omega(u,v)=-2h(JC_-(A)u,v)=h([C(A),J]u,v)$, then
    \begin{align*}
        \alpha(A) =&-\frac{1}{2}[J,C(A)]_{ij}\star(e^i\wedge e^j\wedge\rho_-)\\
        =&-\frac{1}{2}[J,C(A)]_{ij}h^{ip}h^{jq}(e_p\lrcorner( e_q\lrcorner\rho_+))
        =-\frac{1}{2}[J,C(A)]_{ij}h^{ip}h^{jq}\rho^+_{qpm}e^m\\
        =&\quad \frac{1}{2}[J,C(A)]\lrcorner\rho_+.
    \end{align*}
    For the second part, using the identities \eqref{SU3_identities}, we have
    \begin{align*}
        \alpha(A)^\sharp\lrcorner\rho_+=&\frac{1}{2}\alpha(A)_ih^{il}\rho_{ljk}^+e^{jk}
        =\frac{1}{4}[J,C(A)]_{mn}h^{mp}h^{nq}\rho_{pqi}^+h^{il}\rho_{ljk}^+e^{jk}\\
        =&\frac{1}{4}[J,C(A)]_{mn}h^{mp}h^{nq}(-\omega_{pj}\omega_{qk}+\omega_{pk}\omega_{qj}+h_{pj}h_{qk}-h_{pk}h_{qj})e^{jk}\\
        =&[J,C(A)]_{jk}e^{jk}.
    \end{align*}
     Similarly, we obtain $\alpha(A)^\sharp\lrcorner\rho_-=(J[J,C(A)])_{jk}e^{jk}$.
\end{proof}

We summarise the above discussion with the following corollary.

\begin{corollary}\label{cor: vanishing alpha}
    Let $\alpha(A)$ be the $1$-form given in \eqref{alpha_form}, then $\alpha(A)= 0$ if, and only if, $A\in  \sym(\R^6)\oplus \fu(3)$. That means, if the skew-symmetric part  of $A$ anti commutes with $J$.
\end{corollary}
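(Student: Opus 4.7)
The plan is to deduce this corollary directly from Lemma \ref{alpha_representation}, which provides both the compact expression $\alpha(A)=\frac{1}{2}[J,C(A)]\lrcorner\rho_+$ and, more usefully for the converse, the identity $\alpha(A)^\sharp\lrcorner\rho_+(u,v)=2h([J,C(A)]u,v)$ for all $u,v\in\R^6$. The corollary is essentially a statement about when the commutator $[J,C(A)]$ vanishes, so the strategy is to translate each direction of the equivalence into a statement about this commutator and exploit the non-degeneracy of $h$.

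For the easy direction, I would note that $\sym(\R^6)\oplus\fu(3)$ is exactly the sum of all summands of the decomposition \eqref{eq: splitting.gl(6)} other than $\fm$. Thus $A\in\sym(\R^6)\oplus\fu(3)$ is equivalent to $C_-(A)=0$, which, given the definition $C_-(A)=\frac{1}{2}J[C(A),J]$, is equivalent to $[J,C(A)]=0$. Plugging this into the first formula of Lemma \ref{alpha_representation} immediately gives $\alpha(A)=0$.

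For the converse, assume $\alpha(A)=0$. Using the musical isomorphism induced by $h$, the dual vector $\alpha(A)^\sharp$ also vanishes, so by the second formula of Lemma \ref{alpha_representation} we have $h([J,C(A)]u,v)=0$ for all $u,v\in\R^6$. Non-degeneracy of $h$ then forces $[J,C(A)]=0$, meaning $C(A)\in\fu(3)$; writing $A=S(A)+C(A)$ yields $A\in\sym(\R^6)\oplus\fu(3)$.

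There is essentially no obstacle here: once Lemma \ref{alpha_representation} is in hand, the corollary reduces to an elementary non-degeneracy argument plus the characterisation of $\fu(3)$ as the commutant of $J$ inside $\so(6)$. The only subtlety worth flagging is the final clarifying sentence of the statement, which reads more naturally as \emph{the skew-symmetric part of $A$ commutes with $J$}; this is precisely the intrinsic reformulation of $C(A)\in\fu(3)$, i.e.\ of the $\fm$-component of $A$ being zero.
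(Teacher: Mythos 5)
Your argument is correct and follows essentially the same route as the paper, which states the corollary without a separate proof as a summary of the preceding discussion (the identification $\ker\alpha=\sym(\R^6)\oplus \fu(3)$ noted after \eqref{alpha_form} via Lemma \ref{lem: theta_omega}, together with the explicit formulae of Lemma \ref{alpha_representation}); your use of the contraction identity plus non-degeneracy of $h$ for the converse is exactly the intended mechanism. Your remark that the closing sentence of the statement should read ``commutes'' rather than ``anti commutes'' with $J$ is also apt, since the condition is $C_-(A)=0$, i.e.\ $[J,C(A)]=0$.
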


\section{Almost Abelian Lie groups: the torsion of a  $\gt$-structure}\label{almost_abelian_section}

Let $(\fg,A,\varphi)$ be a $7$-dimensional almost Abelian Lie algebra  with Lie bracket \eqref{eq: almost_abelian_Lie_bracket} and $\gt$-structure \eqref{coclosed_almost_abelian_G2}. 
Denote by $\fh=\R^6$ the codimension $1$ Abelian ideal of $\fg$, the Hodge star operator $\ast:\Lambda^k\mathfrak{g}^\ast\rightarrow\Lambda^{7-k}\mathfrak{g}^\ast$, induced by  $g_\varphi(\cdot,\cdot)=(e^1)^2+\cdots+(e^7)^2$. The following Lemma encodes some geometric relations between $\fg$ and $\R^6$:

\begin{lemma}\cites{Lauret2017,lauret2019}\label{Lauret_Lemmata}
	Denote by $d$ the exterior derivative of  $k$-forms on the $\fg$ with Lie bracket $A\in\gl(\R^6)$, so  for $\gamma\in \Lambda^k\mathfrak{h}^\ast$, the following properties hold:
	\begin{enumerate}
		\item[(i)]  $\ast\gamma=\star\gamma\wedge e^7$ and $\ast(\gamma\wedge e^7)=(-1)^k\star\gamma$.
		\item[(ii)] $d e^7=0$ and $d\gamma=(-1)^k\theta(A)\gamma\wedge e^7$.
		\item[(iii)]  $\theta(A)\star\gamma+\star\theta(A^t)\gamma=-\tr(A)\star\gamma$.
	\end{enumerate}
\end{lemma}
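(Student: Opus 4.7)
The plan is to verify the three items in sequence; each reduces to a standard computation once the almost Abelian structure and the metric $g_\varphi=h+(e^7)^2$ on $\fg=\fh\oplus\R e_7$ are written down explicitly.

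For (i), the metric decomposes as an orthogonal sum, so the volume form factorises as $\vol_\fg=\vol_\fh\wedge e^7$. I would verify the two identities on monomials: for a basis $k$-form $\gamma=e^{i_1\cdots i_k}$ with $i_j\in\{1,\ldots,6\}$, the complement of $\{i_1,\ldots,i_k\}$ in $\{1,\ldots,7\}$ equals $\{j_1,\ldots,j_{6-k},7\}$, and the sign produced by reordering these indices is exactly the one that makes $\ast\gamma=\star\gamma\wedge e^7$. The second identity follows by moving the factor $e^7$ past a form of degree $k$, which produces the $(-1)^k$.

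For (ii), I would apply the Chevalley--Eilenberg formula
\[
d\eta(X_0,\dots,X_k)=\sum_{i<j}(-1)^{i+j}\eta([X_i,X_j],X_0,\dots,\widehat{X_i},\dots,\widehat{X_j},\dots,X_k).
\]
Since $[u,v]=0$ for $u,v\in\fh$ and $[e_7,u]=Au\in\fh$, every nonvanishing bracket lies in $\fh$, and $e^7$ vanishes on $\fh$, forcing $de^7=0$. For $\gamma\in\Lambda^k\fh^\ast$, $d\gamma(X_0,\ldots,X_k)$ vanishes unless one of the $X_i$ is $e_7$, so $d\gamma$ is proportional to $e^7$. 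Evaluating at $(e_7,u_1,\ldots,u_k)$ gives $\sum_j(-1)^j\gamma(Au_j,u_1,\dots,\widehat{u_j},\dots,u_k)$, which by a single permutation matches $(\theta(A)\gamma)(u_1,\ldots,u_k)$ from \eqref{eq: theta_representation}. The stated sign $(-1)^k$ in $d\gamma=(-1)^k\theta(A)\gamma\wedge e^7$ arises from commuting $e^7$ past the $k$-form $\theta(A)\gamma$.

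For (iii), I would differentiate the defining identity $\alpha\wedge\star\gamma=\langle\alpha,\gamma\rangle\vol_\fh$ along the one-parameter family $(e^{-tA})^\ast$. The Leibniz rule applied to $\theta(A)$, together with $\theta(A)\vol_\fh=-\tr(A)\vol_\fh$ (which comes from $(e^{-tA})^\ast\vol_\fh=e^{-t\tr A}\vol_\fh$) and the fact that $\langle\alpha,\gamma\rangle$ is a constant scalar, yields
\[
\theta(A)\alpha\wedge\star\gamma+\alpha\wedge\theta(A)\star\gamma=-\tr(A)\,\alpha\wedge\star\gamma.
\]
A short check on $1$-forms, $\langle\theta(A)\alpha,\beta\rangle=-h(A^t\alpha^\sharp,\beta^\sharp)=\langle\alpha,\theta(A^t)\beta\rangle$, extended by the derivation property to all $k$-forms, identifies the $L^2$-adjoint of $\theta(A)$ as $\theta(A^t)$. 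Hence $\theta(A)\alpha\wedge\star\gamma=\langle\theta(A)\alpha,\gamma\rangle\vol_\fh=\alpha\wedge\star\theta(A^t)\gamma$, and because $\alpha$ is arbitrary the identity (iii) follows.

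The main obstacle I expect is purely bookkeeping the signs in (ii), since the $(-1)^k$ depends simultaneously on the Koszul-formula sign, on the slot into which $e_7$ is pushed, and on the convention for $\theta$ in \eqref{eq: theta_representation}; once these are all pinned to the same convention, each identity follows by a direct computation.
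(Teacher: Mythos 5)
Your proof is correct; all three identities check out, including the sign bookkeeping in (ii) (the $(-1)^j$ from Chevalley--Eilenberg against the $(-1)^{j-1}$ from sliding $Au_j$ into the $j$-th slot, and the final $(-1)^k$ from the position of $e^7$) and the derivation-plus-adjoint argument for (iii). Note that the paper itself states this lemma with a citation to Lauret's work and gives no proof, so there is nothing to compare against; your argument is the standard one and fills that gap cleanly.
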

	
According to the relation of the torsion forms with $d\varphi$ and $d\psi$ in \eqref{eq: torsion forms defi}, we have explicitly \cite{Karigiannis2012}*{Eq: (2.2) and (2.3)}:

\begin{equation}\label{eq: general_formulae_torsion}
\tau_0=\frac{1}{7}\ast(\varphi\wedge d\varphi), \quad \tau_1=\frac{1}{12}\ast(\varphi\wedge\ast d\varphi), \quad \tau_2=4\ast(\tau_1\wedge\psi)-\ast d\psi \qandq \tau_3=\ast d\varphi-\tau_0\varphi-3\ast(\tau_1\wedge\varphi).
\end{equation}
Now, we proceed to compute the torsion forms \eqref{eq: general_formulae_torsion} for $(\fg,\varphi)$ in terms of the bracket $A$. 

\begin{proposition}\label{torsion_forms}
	The torsion forms of $(\mathfrak{g},A,\varphi)$ are
	\begin{align*}
	\tau_0=&\frac{2}{7}\tr(JA), \quad \tau_1=-\frac{1}{12}\alpha(A)-\frac{1}{6}\tr(A)e^7,\\
	\tau_2=& -\frac{1}{3}\left(\begin{array}{c|c}
	2[J,C(A)]-\tr(A)J+3(JA^t+AJ) & -J\alpha^\sharp \\ \hline
	\Big(J\alpha^\sharp\Big)^t & 0
	\end{array}
	\right),\\
	\frac{1}{4}\jmath(\tau_3)=& \left(\begin{array}{c|c}
	\frac{1}{14}\tr(JA)I_6-\frac{1}{2}[J,S(A)] & \frac{1}{4}J\alpha^\sharp \\ \hline
	\frac{1}{4}\Big(J\alpha^\sharp\Big)^t & -\frac{3}{7}\tr(JA)
	\end{array}
	\right),
    \end{align*}
	where $\alpha(A)$ is defined in  \eqref{alpha_form}.
\end{proposition}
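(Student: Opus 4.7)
The plan is to insert the explicit expressions for $d\varphi$ and $d\psi$ produced by Lemma \ref{Lauret_Lemmata} into the general formulae \eqref{eq: general_formulae_torsion}, and then extract each torsion form by decomposing $A$ along the $\SU(3)$-splitting \eqref{eq: splitting.gl(6)} with the identities of Lemmas \ref{Fino_Lemma}, \ref{SU3_identities}, \ref{lem: theta_omega}, and \ref{alpha_representation}.

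First I would compute the exterior derivatives of $\varphi = \omega\wedge e^7 + \rho^+$ and $\psi = \tfrac12\omega^2 + \rho^-\wedge e^7$. Since $\omega,\rho^\pm \in \Lambda^\bullet(\fh^*)$, $de^7=0$, and $e^7\wedge e^7 = 0$, Lemma \ref{Lauret_Lemmata}(ii) yields
\begin{equation*}
d\varphi = -\theta(A)\rho^+\wedge e^7, \qquad d\psi = \tfrac12\,\theta(A)(\omega^2)\wedge e^7,
\end{equation*}
and Lemma \ref{Lauret_Lemmata}(i) gives $\ast d\varphi = \star\theta(A)\rho^+$, $\ast d\psi = \tfrac12\star\theta(A)(\omega^2)$. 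For $\tau_0 = \tfrac17\ast(\varphi\wedge d\varphi)$ the $\omega\wedge e^7$-summand dies, leaving $-\tfrac17\star(\rho^+\wedge\theta(A)\rho^+)$. Rewriting $\theta(A)\rho^+ = \theta(JA)\rho^-$ via Lemma \ref{lem: theta_omega} and decomposing $A$ along \eqref{eq: splitting.gl(6)}, the $\su(3)$-piece stabilises $\rho^+$, while the $\sym^0_\pm$, $\fm$, and $\R\cdot I_6$ pieces drop out by parity/trace arguments based on Lemma \ref{SU3_identities}; only the $\R\cdot J$-part survives, producing $\tau_0 = \tfrac27\tr(JA)$.

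For $\tau_1 = \tfrac{1}{12}\ast(\varphi\wedge\ast d\varphi)$ I would split $\varphi\wedge\star\theta(A)\rho^+$ into $\omega\wedge e^7\wedge\star\theta(A)\rho^+$ and $\rho^+\wedge\star\theta(A)\rho^+$. Applying Lemma \ref{Lauret_Lemmata}(iii) to $\gamma=\rho^+$ with $\star\rho^+=\rho^-$ gives $\star\theta(A)\rho^+ = -\tr(A)\rho^- - \theta(A^t)\rho^-$. Combined with $\omega\wedge\rho^- = 0$ from \eqref{eq: volume6} and the definition \eqref{alpha_form} of $\alpha$, the first summand contributes the $\alpha(A)$-piece to the $\fh^*$-part of $\tau_1$, and the second, with $\rho^+\wedge\rho^- = 4\vol_6$, contributes the $\tr(A)e^7$-piece. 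For $\tau_2 = 4\ast(\tau_1\wedge\psi) - \ast d\psi$, Lemma \ref{lem: theta_omega} says that $\theta(A)\omega$ is nonzero only for the $\R\cdot I_6$, $\sym^0_+$, and $\fm$ pieces of $A$, producing respectively the $\tr(A)J$-, $(JA^t+AJ)$-, and $[J,C(A)]$-terms of the displayed matrix; the off-diagonal $J\alpha^\sharp$-block arises from $4\ast(\tau_1\wedge\psi)$ via Lemma \ref{alpha_representation}.

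Finally $\tau_3 = \ast d\varphi - \tau_0\varphi - 3\ast(\tau_1\wedge\varphi)$ is obtained by substitution of the previously derived pieces, and $\jmath(\tau_3)$ is assembled block-by-block from its defining formula $\jmath(\tau)(u,v) = \ast(u\lrcorner\varphi\wedge v\lrcorner\varphi\wedge\tau)$; splitting $\tau_3$ into its $\fh^*$-pure and $e^7$-mixed parts and contracting against $\varphi$ using Lemma \ref{SU3_identities} identifies the diagonal block with $\tfrac{1}{14}\tr(JA)I_6 - \tfrac12[J,S(A)]$ and the off-diagonal block with $\tfrac14 J\alpha^\sharp$. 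The main obstacle I anticipate is bookkeeping: the six-piece $\SU(3)$-decomposition of $A$ must be tracked through each computation, and in the $\jmath(\tau_3)$-block the subtraction $\tau_0\varphi + 3\ast(\tau_1\wedge\varphi)$ from $\star\theta(A)\rho^+$ must exactly cancel the contributions along $\R\cdot I_6$, $\R\cdot J$, and $\fm$, so that only the symmetric part $S(A)$ remains, packaged as $[J,S(A)]$.
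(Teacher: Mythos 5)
Your proposal follows essentially the same route as the paper: compute $d\varphi=-\theta(A)\rho^+\wedge e^7$ and $d\psi=\tfrac12\theta(A)(\omega^2)\wedge e^7$ from Lemma \ref{Lauret_Lemmata}, substitute into \eqref{eq: general_formulae_torsion}, and reduce using the $\SU(3)$-identities; your key intermediate identities (e.g.\ $\star\theta(A)\rho^+=-\tr(A)\rho^--\theta(A^t)\rho^-$) are exactly those the paper uses, and the remaining work is the same entry-by-entry bookkeeping for $\jmath(\tau_3)$ that the paper carries out. The only caveat is a minor misattribution in the $\tau_2$ step: part of the diagonal block (namely $-\tfrac13\alpha(A)^\sharp\lrcorner\rho_+=-\tfrac23[J,C(A)]$ and a portion of $\tfrac13\tr(A)\omega$) comes from $4\ast(\tau_1\wedge\psi)$ rather than from $\theta(A)\omega$, but this does not affect the validity of the method.
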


\begin{proof}
	For the torsion $0$-form, using Lemmata \ref{lem: theta_omega} and \ref{Lauret_Lemmata}, we have: 
	\begin{align*}
		\tau_0=&\frac{1}{7}\ast(\varphi\wedge d\varphi)=-\frac{1}{7}\ast(\rho_+\wedge\theta(A)\rho_+\wedge e^7)\\
		      =&-\frac{1}{7}\star(\theta(JA)\rho_-\wedge\star\rho_-)=-\frac{1}{7}\langle \theta(JA)\rho_-,\rho_-\rangle=\frac{2}{7}\tr(JA).
	\end{align*}
For the torsion $1$-form, we use Lemma \ref{Lauret_Lemmata} and \eqref{alpha_form}
	\begin{align*}
		\tau_1=&\frac{1}{12}\ast(\varphi\wedge\ast d\varphi)=\frac{1}{12}\ast(\varphi\wedge\star\theta(A)\rho_+)\\
		      =&\frac{1}{12}\Big(\star(\omega\wedge \star\theta(A)\rho_+)+\star(\rho_+\wedge\star\theta(A)\rho_+) e^7\Big)\\
		      =&\frac{1}{12}\Big(-\star(\omega\wedge\theta(A^t)\rho_-)+\langle\rho_+,\theta(A)\rho_+\rangle e^7\Big)\\
		      =&-\frac{1}{12}\alpha(A)-\frac{1}{6}\tr(A)e^7.
    \end{align*}
 Replacing the expression of $\tau_1$ into $d\psi$ and using Lemma \ref{Fino_Lemma}, we get the torsion $2$-form 
    \begin{align*}
    	\tau_2
    	      =&-\frac{1}{3}\star(\alpha(A)\wedge\frac{\omega^2}{2})\wedge e^7-\frac{1}{3}\star(\alpha(A)\wedge\rho_-)-\frac{2}{3}\tr(A)\star\Big(\frac{\omega^2}{2}\Big)-\star\theta(A)\frac{\omega^2}{2}\\
    	      =&\frac{1}{3}J^\ast\alpha(A)\wedge e^7-\frac{1}{3}\alpha(A)^\sharp\lrcorner\rho_++\frac{1}{3}\tr(A)\omega+\theta(A^t)\omega.
    \end{align*}
  Thus, the matrix expression for $\tau_2$ is obtained by noticing that $\theta(A^t)\omega=-\omega(A^t\cdot,\cdot)-\omega(\cdot, A^t\cdot)=-\langle(JA^t+AJ)\cdot,\cdot\rangle$ and 
    $$
    (J^*\alpha(A)\wedge e^7)_{l7}=J^*\alpha(A)(e_l)=\alpha(A)(Je_l)=\langle \alpha^\sharp,Je_l\rangle=-(J\alpha^\sharp)_l \qforq l\in\{1,\dots,6\}.
    $$
    Lastly, we replace the expressions of $\tau_0$ and $\tau_1$ in order to derive the torsion $3$-form
    \begin{align*}
    	\tau_3    	      =&-\frac{2\tr(JA)}{7}(\omega\wedge e^7+\rho_+)+\frac{1}{4}\ast(\alpha(A)\wedge\omega\wedge e^7)+\frac{1}{4}\ast(\alpha(A)\wedge\rho_+)\\
           &+\frac{1}{2}\tr(A)\ast(e^7\wedge\rho_+)-\ast(\theta(A)\rho_+\wedge e^7)\\
    	      =&-\frac{2\tr(JA)}{7}(\omega\wedge e^7+\rho_+)-\frac{1}{4}\star(\alpha(A)\wedge\omega)+\frac{1}{4}\star(\alpha(A)\wedge\rho_+)\wedge e^7+\frac{1}{2}\tr(A)\rho_-+\star\theta(A)\rho_+.
    \end{align*}
 Now, applying the definition $\jmath(\tau_3)(e_a,e_b)=\ast(e_a\lrcorner\varphi\wedge e_b\lrcorner\varphi\wedge\tau_3)$, for $a=b=7$, we have
    \begin{align*}
    j(\tau_3)_{77}=-\frac{2\tr(JA)}{7}\ast(\omega^3\wedge e^7)=-\frac{12\tr(JA)}{7}.
    \end{align*}
    For $a=7$ and $b\neq 7$, using the identities from Lemmas \ref{Fino_Lemma}-\ref{Lauret_Lemmata}, we have
    \begin{align*}
    	j(\tau_3)_{7b}        =&-\frac{1}{4}\ast(\omega\wedge (e_b\lrcorner\omega)\wedge e^7\wedge\star(\alpha(A)\wedge\omega))+\ast(\omega\wedge (e_b\lrcorner\omega)\wedge e^7\wedge\star\theta(A)\rho_+)\\
        &+\frac{1}{4}\ast(\omega\wedge (e_b\lrcorner\rho_+)\wedge\star(\alpha(A)\wedge\rho_+)\wedge e^7)\\
    	=&\frac{1}{4}\star((e_b\lrcorner\omega)\wedge\star(\alpha(A)\wedge\omega)\wedge\omega)-\star((e_b\lrcorner\omega)\wedge\omega\wedge\star\theta(A)\rho_+)\\
    	&+\frac{1}{4}\star((e_b\lrcorner\rho_+)\wedge\star(\alpha(A)\wedge\rho_+)\wedge\omega)\\
    	=&-\frac{1}{2}Je_b\lrcorner\star^2\alpha(A)-Je_b\lrcorner\alpha-\frac{1}{4}\star((e_b\lrcorner\rho_+)\wedge J^\ast\alpha(A)\wedge\rho_-)\\
    	=&-\frac{1}{2}Je_b\lrcorner\alpha(A)+\frac{1}{4}(J^\ast\alpha(A))^\sharp\lrcorner\star(\star(e^b\wedge\rho_-)\wedge\rho_-)=-(J^\ast\alpha(A))_b=(J\alpha(A)^\sharp)_b.
    \end{align*}
     It remains the case $a\neq 7\neq b$, hence
    \begin{align*}
    	j(\tau_3)_{ab}    	=&\underbrace{-\tau_0\ast((e_a\lrcorner\omega)\wedge e^7\wedge(e_b\lrcorner\rho_+)\wedge\rho_+)}_{(\rI)}\underbrace{-\frac{1}{4}\ast((e_a\lrcorner\omega)\wedge e^7\wedge(e_b\lrcorner\rho_+)\wedge\star(\alpha(A)\wedge\omega))}_{(\rI\rI)}\\
    	              &\underbrace{+\frac{1}{2}\tr(A)\ast((e_a\lrcorner\omega)\wedge e^7\wedge(e_b\lrcorner\rho_+)\wedge\rho_-)}_{(\rI\rI\rI)}\underbrace{+\ast((e_a\lrcorner\omega)\wedge e^7\wedge(e_b\lrcorner\rho_+)\wedge\star\theta(A)\rho_+)}_{(\rI\rV)}\\
    	              &\underbrace{-\tau_0\ast((e_b\lrcorner\omega)\wedge e^7\wedge(e_a\lrcorner\rho_+)\wedge\rho_+)}_{(\rV)}\underbrace{-\frac{1}{4}\ast((e_b\lrcorner\omega)\wedge e^7\wedge(e_a\lrcorner\rho_+)\wedge\star(\alpha(A)\wedge\omega))}_{(\rV\rI)}\\
    	              &\underbrace{+\frac{1}{2}\tr(A)\ast((e_b\lrcorner\omega)\wedge e^7\wedge(e_a\lrcorner\rho_+)\wedge\rho_-)}_{(\rV\rI\rI)}\underbrace{+\ast((e_b\lrcorner\omega)\wedge e^7\wedge(e_a\lrcorner\rho_+)\wedge\star\theta(A)\rho_+)}_{(\rV\rI\rI\rI)}\\
    	              &\underbrace{-\tau_0\ast((e_a\lrcorner\rho_+)\wedge(e_b\lrcorner\rho_+)\wedge\omega\wedge e^7)}_{(\rI\rX)}\underbrace{+\frac{1}{4}\ast((e_a\lrcorner\rho_+)\wedge(e_b\lrcorner\rho_+)\wedge\star(\alpha(A)\wedge\rho_+)\wedge e^7)}_{(\rX)}
    \end{align*}
    For the first term, we have
    \begin{align*}
    	(\rI)=&\tau_0\star((e_a\lrcorner\omega)\wedge(e_b\lrcorner\rho_+)\wedge\rho_+)=-\tau_0(e_a\lrcorner\omega)^\sharp\lrcorner\star((e_b\lrcorner\rho_+)\wedge\rho_+)\\
    	   =&-\tau_0 Je_a\lrcorner\star(\star(e^b\wedge\rho_-)\wedge\rho_+)=-2\tau_0Je_a\lrcorner\star^2J^\ast e^b=-2\tau_0h_{ba},
    \end{align*}
    similarly $(\rV)=-2\tau_0h_{ab}$. For the second term, we have
    \begin{align*}
    	(\rI\rI)=&\frac{1}{4}\star((e_a\lrcorner\omega)\wedge(e_b\lrcorner\rho_+)\wedge\star(\alpha(A)\wedge\omega))=-\frac{1}{4}Je_a\lrcorner\star(\star(e^b\wedge\rho_-)\wedge(\alpha(A)^\sharp\lrcorner\omega)\wedge\omega)\\
    	    =&-\frac{1}{4}Je_a\lrcorner\star((\alpha(A)^\sharp\lrcorner\omega)\wedge J^\ast e^b\wedge\rho_+)=\frac{1}{4}Je_a\lrcorner(J^\ast\alpha(A))^\sharp\lrcorner\star(J^\ast e^b\wedge\rho_+)\\
    	    =&\frac{1}{4}Je_a\lrcorner(J^\ast\alpha(A))^\sharp\lrcorner Je_b\lrcorner\rho_-=\frac{1}{4}\rho_-(Je_b,(J^\ast\alpha(A))^\sharp,Je_a)\\
         =&-\frac{1}{4}J^\ast\rho_-(\alpha(A)^\sharp,e_a,e_b)=\frac{1}{4}(\alpha(A)^\sharp\lrcorner\rho_+)_{ab},
    \end{align*}
    similarly $(\rV\rI)=\frac{1}{4}(\alpha(A)^\sharp\lrcorner\rho_+)_{ba}$. For the third term, we have
    \begin{align*}
    	(\rI\rI\rI)=&-\frac{1}{2}\tr(A)\star((e_a\lrcorner\omega)\wedge\star(e^b\wedge\rho_-)\wedge\rho_-)\\
    	     =&\tr(A)(e_a\lrcorner\omega)^\sharp\lrcorner\star^2e^b=-\tr(A)\omega_{ab},
    \end{align*}
    similarly $(\rV\rI\rI)=-\tr(A)\omega_{ba}$. For the fourth term, we have
    \begin{align*}
        (\rI\rV)=&-\star((e_a\lrcorner\omega)\wedge(e_b\lrcorner\rho_+)\wedge\star\theta(A)\rho_+)\\
            =&-\langle (e_a\lrcorner\omega)\wedge(e_b\lrcorner\rho_+),\theta(A)\rho_+\rangle\\
            =&\frac{1}{2}\omega_{ai}\rho^+_{bjk}(A_{i}^l\rho^+_{ljk}+A_{j}^l\rho^+_{ilk}+A_{k}^l\rho^+_{ijl})\\
            =&\frac{1}{2}(4\omega_{ai}A_{i}^lh_{bl}+A_{j}^l\rho^-_{kla}\rho^+_{bjk}-A_{k}^l\rho^-_{jla}\rho^+_{bjk})\\
            =&\frac{1}{2}(4(AJ)_{ab}+A_{j}^l(-\omega_{lb}h_{aj}+\omega_{ab}h_{lj}+\omega_{lj}h_{ab}-\omega_{aj}h_{lb})\\
             &+A_{k}^l(-\omega_{lb}h_{ak}+\omega_{ab}h_{lk}+\omega_{lk}h_{ab}-\omega_{ak}h_{lb}))\\
            =&\frac{1}{2}(4(AJ)_{ab}-(JA)_{ab}+\tr(A)\omega_{ab}+\tr(JA)h_{ab}-(AJ)_{ab}\\
             &-(JA)_{ab}+\tr(A)\omega_{ab}+\tr(JA)h_{ab}-(AJ)_{ab}) \\
            =&\tr(JA)h_{ab}+\tr(A)\omega_{ab}-[J,A]_{ab} ,
    \end{align*}
    similarly, we obtain $(\rV\rI\rI\rI)=\tr(JA)h_{ba}+\tr(A)\omega_{ba}-[J,A]_{ba}$. For the ninth term, we have
    \begin{align*}
        (\rI\rX)=&-\tau_0\star(\star(e^a\wedge\rho_-)\wedge\star(e^b\wedge\rho_-)\wedge\omega)\\
           =&-\tau_0\star(\star(e^a\wedge\rho_-)\wedge J^\ast e^b\wedge\rho_+)=-2\tau_0Je_b\lrcorner\star^2J^\ast e^a=-2\tau_0h_{ab}.
     \end{align*}
     And, for the last term, we have
     \begin{align*}
         (\rX)=&\frac{1}{4}\star((e_a\lrcorner\rho_+)\wedge(e_b\lrcorner\rho_+)\wedge\star(\alpha(A)\wedge\rho_+))\\
             =&\frac{1}{4}\star((e_b\lrcorner(\rho_+\wedge\star(e^a\wedge\rho_-))\wedge\star(\alpha(A)\wedge\rho_+))-\frac{1}{4}\star((e_b\lrcorner e_a\lrcorner\rho_+)\wedge\rho_+\wedge\star(\alpha(A)\wedge\rho_+))\\
             =&\frac{1}{2}\star((e_b\lrcorner\star J^\ast e^a)\wedge\star(\alpha(A)\wedge\rho_+))-\frac{1}{2}\star((e_b\lrcorner e_a\lrcorner\rho_+)\wedge\star\alpha(A))\\
             =&-\frac{1}{2}\star(\star(e^b\wedge J^\ast e^a)\wedge\star(\alpha(A)\wedge\rho_+))+\frac{1}{2}(e_b\lrcorner e_a\lrcorner\rho_+)^\sharp\lrcorner\star^2\alpha(A)\\
              =&-\frac{1}{2}\star(e^b\wedge J^\ast e^a\wedge\alpha(A)\wedge\rho_+)-\frac{1}{2}(e_b\lrcorner e_a\lrcorner\rho_+)^\sharp\lrcorner\alpha(A)\\
              =&\frac{1}{2}(\alpha(A)^\sharp\lrcorner\rho_-)(Je_a,e_b)-\frac{1}{2}(\alpha(A)^\sharp\lrcorner\rho_+)(e_a,e_b)\\
              =&\langle J[J,C(A)]Je_a,e_b\rangle-\langle[J,C(A)]e_a,e_b\rangle=0
        \end{align*}
        In conclusion, for $a\neq 7\neq b$, we have
        $$
          j(\tau_3)_{ab}=-6\tau_0h_{ab}+2\tr(JA)h_{ab}-[J,A]_{ab}-[J,A]_{ba}
        $$
        note that $[J,A]^t=[J,A^t]$. So, joining the above computations, we obtain the expression $\frac{1}{4}j(\tau_3)$. 
\end{proof}

As a consequence of Proposition \ref{torsion_forms}, we can write the full torsion tensor
\begin{equation}\label{eq: full_torsion_tensor_def}
    T=\frac{\tau_0}{4}g_\varphi-\frac{1}{4}\jmath(\tau_{3})-(\tau_1)^\sharp\lrcorner\varphi-\frac{1}{2}\tau_2,
\end{equation}
in terms of the Lie bracket $A\in \gl(\R^6)$.

\begin{corollary}\label{full_torsion_tensorr}
	The full torsion tensor of $(\fg,A,\varphi)$ is
	\begin{equation}\label{eq: torsion_tensor_A}
	T= \frac{1}{2}\left(\begin{array}{c|c}
	[J,S(A)]+[J,C(A)]+(JA^t+AJ) & -J\alpha(A)^\sharp \\ \hline
	0 & \tr(JA)
	\end{array}
	\right)
	\end{equation}
\end{corollary}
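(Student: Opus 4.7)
The proof is a direct substitution of the four torsion components from Proposition \ref{torsion_forms} into the defining decomposition \eqref{eq: full_torsion_tensor_def} of $T$. Three of those four pieces are already presented in block-matrix form in the splitting $\fg = \fh \oplus \R e_7$, so the first step is to put $(\tau_1)^\sharp \lrcorner \varphi$ into the same format.

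Splitting $(\tau_1)^\sharp = -\tfrac{1}{12}\alpha(A)^\sharp - \tfrac{1}{6}\tr(A)\,e_7$ and using $\varphi = \omega\wedge e^7 + \rho^+$ together with $e_7\lrcorner\varphi = \omega$ and the identity $\alpha(A)^\sharp\lrcorner\rho^+(u,v) = 2h([J,C(A)]u,v)$ from Lemma \ref{alpha_representation}, the $\fh\times\fh$ block of $(\tau_1)^\sharp\lrcorner\varphi$ becomes a linear combination of $\tr(A)\,\omega$ and $[J,C(A)]$; the $(a,7)$ and $(7,a)$ entries are negatives of one another and proportional to $J\alpha(A)^\sharp$; and the $(7,7)$ entry vanishes since $(\tau_1)^\sharp\lrcorner\varphi$ is a $2$-form.

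With this expression in hand the proof reduces to block-wise arithmetic. In the upper-left $6\times 6$ block, the two $\tfrac{1}{14}\tr(JA)\,I_6$ terms coming from $\tfrac{\tau_0}{4}g_\varphi$ and $-\tfrac{1}{4}\jmath(\tau_3)$ cancel, the two $\tr(A)\,\omega$ (equivalently $\tr(A)J$) terms coming from $-(\tau_1)^\sharp\lrcorner\varphi$ and $-\tfrac{1}{2}\tau_2$ cancel, and the remaining contributions combine to give $\tfrac{1}{2}\bigl([J,S(A)]+[J,C(A)]+(JA^t+AJ)\bigr)$. In the $(a,7)$ position the three contributions proportional to $J\alpha(A)^\sharp$ add to $-\tfrac{1}{2}J\alpha(A)^\sharp$; in the $(7,a)$ position the symmetric piece from $-\tfrac{1}{4}\jmath(\tau_3)$ is cancelled by the antisymmetric pieces from $-(\tau_1)^\sharp\lrcorner\varphi$ and $-\tfrac{1}{2}\tau_2$, yielding $0$. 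The $(7,7)$ entry receives only the diagonal contributions $\tfrac{1}{14}\tr(JA)+\tfrac{3}{7}\tr(JA)=\tfrac{1}{2}\tr(JA)$.

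The only real obstacle is bookkeeping: one must be consistent with the identification of $2$-forms and matrices via $\omega_{ij}=h(Je_i,e_j)$, so that the symmetric/antisymmetric decomposition of $T$ on the two sides lines up block-by-block, especially in the mixed $(a,7)$ and $(7,a)$ positions where the cancellation producing $0$ in the lower block relies on correctly signing the contribution from $(\tau_1)^\sharp\lrcorner\varphi$. No new geometric input beyond Proposition \ref{torsion_forms} and Lemma \ref{alpha_representation} is required.
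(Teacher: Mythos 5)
Your proposal is correct and follows essentially the same route as the paper: the paper's proof likewise consists of computing $(\tau_1)^\sharp\lrcorner\varphi=-\tfrac{1}{12}(J\alpha(A)^\sharp)^\flat\wedge e^7-\tfrac{1}{6}[J,C(A)]-\tfrac{1}{6}\tr(A)J$ (the one ingredient of \eqref{eq: full_torsion_tensor_def} not already in block form) and then substituting the expressions of Proposition \ref{torsion_forms} into \eqref{eq: full_torsion_tensor_def}. Your additional bookkeeping of the cancellations (the $\tfrac{1}{14}\tr(JA)I_6$ and $\tr(A)J$ terms in the $\fh\times\fh$ block, the $(7,7)$ sum $\tfrac{1}{14}+\tfrac{3}{7}=\tfrac12$, and the sign-sensitive mixed entries) is consistent with the stated result and is exactly the arithmetic the paper leaves implicit.
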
  

\begin{proof}
   We just need to compute
    \begin{align*}
        (\tau_1)^\sharp\lrcorner\varphi=&-\frac{1}{12}(\alpha(A)^\sharp\lrcorner\omega)\wedge e^7-\frac{1}{12}\alpha(A)^\sharp\lrcorner\rho_+-\frac{1}{6}\tr(A)\omega\\
        =&-\frac{1}{12}(J\alpha(A)^\sharp)^\flat\wedge e^7-\frac{1}{6}[J,C_A]-\frac{1}{6}\tr(A)J.
    \end{align*}
    Therefore, the matrix \eqref{eq: torsion_tensor_A} is obtained by replacing the expression of $\tau_0$, $\jmath(\tau_3)$ and $\tau_2$ (cf Proposition \ref{torsion_forms}) into \eqref{eq: full_torsion_tensor_def}.
\end{proof}

\section{Fernández-Gray classes of invariant $\rG_2$-structures on $(\fg,A)$}

 According to the decomposition of $\gl(\fg)$ into the irreducible $\gt$-invariant submodules 
\begin{equation*}
\gl(\fg)=\cW_1\oplus\cW_2\oplus \cW_3\oplus\cW_4=:\cW,
\end{equation*}
each torsion term of \eqref{eq: full_torsion_tensor_def} belongs to an irreducible component of $\cW$, according to the Fernández-Gray notation \cite{fernandez1982riemannian}*{$\S$ 4}, it is given by
\begin{equation*}
    \frac{\tau_0}{4}g_\varphi\in \cW_1, \quad -\frac{1}{2}\tau_2\in\cW_2, \quad -\frac14\jmath(\tau_3)\in\cW_3 \qandq -(\tau_1)^\sharp\lrcorner\varphi\in \cW_4.
\end{equation*}
This provides $16$-torsion classes of a $\gt$-structure. Alternatively, Fernández and Gray \cite{fernandez1982riemannian} described the $16$-torsion classes of a $\gt$-structure, defining relations of $\nabla\varphi$, $d\varphi$ or/and $d\psi$. 
On the other hand, with respect to the $\SU(3)$-invariant decomposition \eqref{eq: splitting.gl(6)}, there are $36$-classes of $7$-dimensional almost Abelian Lie algebras $(\fg,A)$. The next theorem classifies the $16$-torsion classes of an invariant $\gt$-structure in terms of the $\SU(3)$-invariant decomposition of the bracket $A$: 

\begin{theorem}\label{thm: 16-torsion classes}
Let $(\fg,A,\varphi)$ be an almost Abelian Lie algebra with $\gt$-structure. Thus, for each torsion form we have: 
\begin{equation}\label{eq: torsion forms vanish conditions}
    \begin{array}{ccc}
            \tau_0=0 & \Leftrightarrow & \tr(JA)=0 \\ 
             \tau_1=0 & \Leftrightarrow & \tr(A)=0, \quad C_-(A)=0 \\
             \tau_2=0 & \Leftrightarrow & C_-(A)=S_+(A)=0 \\ 
             \jmath(\tau_3)=0 & \Leftrightarrow & \tr(JA)=0, \quad C_-(A)=S_-(A)=0.
        \end{array}
\end{equation}
In particular, within the $16$-torsion classes, those classes on Table \ref{tb:torsion_classes_introduction} are realised.
\end{theorem}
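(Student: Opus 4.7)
The plan is to first establish the four equivalences in \eqref{eq: torsion forms vanish conditions} by reading off Proposition \ref{torsion_forms}, and then to bookkeep which $\SU(3)$-irreducible summands of $A$ survive under each combination of vanishing torsion forms, yielding the table.

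For the equivalences, the first is immediate from $\tau_0=\tfrac{2}{7}\tr(JA)$. For $\tau_1$, since $\alpha(A)\in\fh^\ast$ and $\tr(A)e^7$ are linearly independent, $\tau_1=0$ iff $\tr(A)=0$ and $\alpha(A)=0$; by Corollary \ref{cor: vanishing alpha}, the second is equivalent to $C_-(A)=0$. For $\tau_2$, the off-diagonal block of the matrix in Proposition \ref{torsion_forms} vanishes iff $\alpha(A)=0$, i.e.\ $C_-(A)=0$; once $C_-(A)=0$, we have $[J,C(A)]=[J,C_+(A)]+[J,\tfrac{\tr(JA)}{6}J]=0$ because $C_+(A)\in\su(3)$ commutes with $J$, and then a direct calculation using $A=S(A)+C(A)$, $\{J,I_6\}=2J$, $[J,S_+]=0$, $\{J,S_-\}=0$ shows
\[
JA^t+AJ=\{J,S(A)\}-[J,C(A)]=\tfrac{\tr(A)}{3}J+2JS_+(A),
\]
so the upper-left block reduces to $6JS_+(A)$, which vanishes iff $S_+(A)=0$. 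The analysis for $\jmath(\tau_3)$ is parallel: the $(7,7)$-entry forces $\tr(JA)=0$, the off-diagonal forces $C_-(A)=0$, and the remaining block $\tfrac{1}{14}\tr(JA)I_6-\tfrac12[J,S(A)]$ collapses to $-\tfrac12[J,S_-(A)]=-JS_-(A)$, which vanishes iff $S_-(A)=0$.

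Once the four equivalences are in place, each row of Table~\ref{tb:torsion_classes_introduction} is obtained by intersecting the conditions \eqref{eq: torsion forms vanish conditions} across the vanishing torsion forms and then reading off which components of the decomposition
\[
A=\tfrac{\tr(A)}{6}I_6+S_+(A)+S_-(A)+\tfrac{\tr(JA)}{6}J+C_+(A)+C_-(A)
\]
are unconstrained. For instance, the class $\cW_2\oplus\cW_3\oplus\cW_4$ requires only $\tr(JA)=0$, removing the $\R\cdot J$ summand and leaving $A\in\sym(\R^6)\oplus\fm\oplus\su(3)$, while $\cW_1\oplus\cW_3\oplus\cW_4$ requires only $\tau_2=0$, removing $S_+(A)$ and $C_-(A)$ and leaving $A\in\R\cdot I_6\oplus\sym_-^0\oplus\R\cdot J\oplus\su(3)$. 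The remaining nine rows follow from the same mechanical intersection.

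The only genuine structural observation, which explains why the table contains twelve and not sixteen rows, is that the $\tau_3=0$ condition forces $\tr(JA)=0$ and hence $\tau_0=0$; thus the four classes $\cW_1$, $\cW_1\oplus\cW_2$, $\cW_1\oplus\cW_4$, $\cW_1\oplus\cW_2\oplus\cW_4$ are incompatible with \eqref{eq: torsion forms vanish conditions} and do not appear. The main potential pitfall is purely algebraic: one must be careful when simplifying $JA^t+AJ$ and $[J,S(A)]$ that each summand of the $\SU(3)$-decomposition behaves correctly under commutation/anti-commutation with $J$, so that the interior blocks of the matrices for $\tau_2$ and $\jmath(\tau_3)$ truly isolate $S_+(A)$ and $S_-(A)$ respectively, rather than mixing them with the scalar or $\R\cdot J$ summands.
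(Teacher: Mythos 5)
Your proposal is correct and follows essentially the same route as the paper: both reduce everything to Proposition \ref{torsion_forms}, decompose the matrix blocks of $\tau_2$ and $\jmath(\tau_3)$ along the $\SU(3)$-splitting \eqref{eq: splitting.gl(6)} (your simplification of the upper-left blocks to $6JS_+(A)$ and $-JS_-(A)$ matches the paper's rewriting in \eqref{eq: tau2 and tau3}), and invoke Corollary \ref{cor: vanishing alpha} to translate $\alpha(A)=0$ into $C_-(A)=0$. You actually spell out more of the commutator/anti-commutator bookkeeping than the paper does, and your closing remark that $\jmath(\tau_3)=0$ forces $\tau_0=0$ is exactly the observation behind Corollary \ref{cor: No-existence_torsion_classes}.
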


\begin{proof}
Denote by $A=\frac{\tr(A)}{6}I+S_+(A)+S_-(A)+\frac{\tr(JA)}{6}J+C_+(A)+C_-(A)$ the $\SU(3)$-invariant decomposition for the Lie bracket of $\fg$. Thus,
the torsion forms $\tau_2$ and $\tau_3$ from Proposition \ref{torsion_forms} can be rewritten as:
 \begin{align}\label{eq: tau2 and tau3}
 \begin{split}
	\tau_2=& -\frac{1}{3}\left(\begin{array}{c|c}
	-2JC_-(A)+6JS_+(A) & -J\alpha^\sharp \\ \hline
	\Big(J\alpha^\sharp\Big)^t & 0
	\end{array}
	\right),\\
	\frac{1}{4}\jmath(\tau_{3})=& \left(\begin{array}{c|c}
	\frac{1}{14}\tr(JA)I_6-JS_-(A) & \frac{1}{4}J\alpha^\sharp \\ \hline
	\frac{1}{4}\Big(J\alpha^\sharp\Big)^t & -\frac{3}{7}\tr(JA)
	\end{array}
	\right).
	\end{split}
    \end{align}
     According with Corollary \ref{cor: vanishing alpha}, Proposition \ref{torsion_forms} and equation \eqref{eq: tau2 and tau3}, we obtain the corresponding condition for $\tau_k=0$ (with $k=0,1,2,3$), given in \eqref{eq: torsion forms vanish conditions}.
\end{proof}

\begin{remark}
   \begin{enumerate}
    \item Notice that the torsion classes $\cW_2$ (closed $\gt$-structures) and $\cW_1\oplus\cW_3$ (coclosed $\gt$-structures) correspond with 
    $$\sym_+^0\oplus \su(3)\simeq\fsl(3,\C) \qandq \sym^0_-\oplus\R\cdot J\oplus\su(3)\simeq\fsp(6,\R),$$ respectively. These results have been obtained independently by Freibert \cites{Freibert2012,Freibert2013}, using a different approach.
    \item The Lie algebra $(\fg,A,\varphi)$ with bracket in $\R\cdot I_6\oplus\so(6)$ has a $\gt$-structure with  torsion forms $\tau_1,\tau_2,\tau_3$. In particular, $A$ is normal and using expression for the Ricci curvature \cite{Arroyo2013}*{Eq (8)}:
    \begin{equation}\label{Ricci_operator}
	\ricci_A=\left(\begin{array}{c|c}
	\frac{1}{2}\bigg([A,A^t]-\tr(A)(A+A^t)\bigg) & 0 \\ \hline
	0 & \color{blue}-\frac{1}{4}\tr\left((A+A^t)^2\right)
	\end{array}\right),
	\end{equation}
	we see that the induced $\gt$-metric is  Einstein, with constant $c=-(\tr A)^2$ (e.g. \cite{delBarco2023}*{Lemma 6.1}).
	\item If $B\in \so(6)$, then the induced $\gt$-metric is Ricci flat, in particular, $(\fg,B,g_\varphi)$ is flat. However, $\varphi$ is not torsion-free, since the terms $\tr(JB)$ and $C_-(B)$ could be non-zero.
    \end{enumerate}
\end{remark}

\begin{corollary}\label{cor: No-existence_torsion_classes}
    There does not exist an invariant $\gt$-structure on an almost Abelian Lie group with torsion strictly in one of the following classes:
    \begin{equation*}
        \cW_1, \quad \cW_1\oplus \cW_4, \quad \cW_1\oplus\cW_2 \qandq \cW_1\oplus\cW_2\oplus\cW_4.
    \end{equation*}
\end{corollary}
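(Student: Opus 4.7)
The plan is to obtain the result as an immediate consequence of the characterisation in Theorem \ref{thm: 16-torsion classes}. The key observation is that each of the four torsion classes listed in the statement is a subclass of the $\tau_3 = 0$ condition and simultaneously demands $\tau_0 \ne 0$, so it suffices to show that $\tau_3 = 0$ already forces $\tau_0 = 0$ on any almost Abelian $(\fg,A,\varphi)$.

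First, I would read off from \eqref{eq: torsion forms vanish conditions} in Theorem \ref{thm: 16-torsion classes} that $\jmath(\tau_3) = 0$ is equivalent to the three algebraic conditions
\begin{equation*}
\tr(JA) = 0, \qquad C_-(A) = 0, \qquad S_-(A) = 0,
\end{equation*}
whereas $\tau_0 = 0$ is equivalent to the single condition $\tr(JA) = 0$. Since the characterisation of $\tau_3 = 0$ includes $\tr(JA) = 0$ as one of its requirements, the implication $\tau_3 = 0 \Rightarrow \tau_0 = 0$ is then immediate, because $\jmath$ restricted to $\Omega^3$ is injective (equivalently, because $\tau_3$ and $\jmath(\tau_3)$ vanish together).

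Second, I would go through the four listed classes one by one. The class $\cW_1$ corresponds to $\tau_1 = \tau_2 = \tau_3 = 0$ with $\tau_0 \ne 0$; the class $\cW_1 \oplus \cW_4$ to $\tau_2 = \tau_3 = 0$ with $\tau_0 \ne 0$; the class $\cW_1 \oplus \cW_2$ to $\tau_1 = \tau_3 = 0$ with $\tau_0 \ne 0$; and the class $\cW_1 \oplus \cW_2 \oplus \cW_4$ to $\tau_3 = 0$ with $\tau_0 \ne 0$. In each case the hypothesis $\tau_3 = 0$ combined with the implication above yields $\tau_0 = 0$, contradicting the requirement that $\varphi$ lies strictly in the class (which forces its $\cW_1$-component to be non-zero).

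The only conceptual point to be careful about is the strict interpretation of the torsion classes: a $\gt$-structure belongs \emph{strictly} to $\cW_1 \oplus \cdots$ when its torsion projects non-trivially onto each of the listed irreducible components. Under this convention, the above argument is complete, and no further computation is needed beyond invoking Theorem \ref{thm: 16-torsion classes}. I expect no genuine obstacle here: the work was already done in proving the theorem, and the corollary merely repackages the observation that, in the almost Abelian setting, the algebraic constraint $\tr(JA)=0$ is shared by the vanishing criteria of $\tau_0$ and $\tau_3$.
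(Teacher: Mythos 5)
Your argument is correct and is exactly the paper's own reasoning: the corollary is an immediate consequence of Theorem \ref{thm: 16-torsion classes}, since the vanishing criterion for $\jmath(\tau_3)$ contains $\tr(JA)=0$, which is precisely the criterion for $\tau_0=0$, so $\tau_3=0$ forces $\tau_0=0$ and rules out the four classes whose strict membership requires $\tau_3=0$ together with $\tau_0\neq 0$. No gap.
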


\begin{remark} Independent of the context of almost Abelian Lie groups, some of the torsion classes in Corollary \ref{cor: No-existence_torsion_classes} never happen in manifolds with some prescribed geometry and/or topology:
\begin{enumerate}
    \item 
    A $\gt$-structure with torsion in $\cW_1$ has non-negative the scalar curvature  $\Scal(g)=\frac{28}{9}\tau_0^2$. Unlike, every left invariant metric on a solvable Lie group is either flat or else has strictly negative scalar curvature \cite{milnor1976curvatures}*{Theorem 3}.
    \item 
    An invariant $\gt$-structure $\varphi$ with torsion in  $\cW_1\oplus \cW_4$ has torsion either $\cW_1$ or $\cW_4$. Indeed, taking the exterior derivative of \eqref{eq: torsion forms defi} with $\tau_2=0$ and $\tau_3=0$, we obtain the equations
    $$
    d\tau_1=0 \qandq d\tau_0=\tau_0\tau_1.
    $$
    Since $\varphi$ is invariant then $\tau_0$ is constant and thus $\tau_1=0$ or $\tau_0=0$.
    \item 
   The $\gt$-structures with torsion class in $\cW_1\oplus \cW_2$ on connected manifolds reduce to either $\cW_1$ or $\cW_2$ \cite{Cabrera1996}*{Theorem 2.1}.
\end{enumerate}
\end{remark}

\begin{definition}
    A \emph{lattice} $\Gamma$ of a Lie group $G$ is a discrete subgroup $\Gamma\subset G$, such that the quotient $\Gamma\backslash G$ is compact.
\end{definition}

There exist obstructions for the existence of a lattice $\Gamma\subset G$, in particular, to achieve it, $G$ needs to be a \emph{unimodular} Lie group \cite{milnor1976curvatures}*{Lemma 6.2}, (i.e., $\det(\Ad(g))=\pm 1$ for every $g\in G$). Equivalently, when $G$ is connected, it is unimodular if, and only if, the linear transformation is $\tr(\ad(u))=0$ for every $u\in \fg$. In this case, $\fg$ is called a unimodular Lie algebra. Using Theorem \ref{thm: 16-torsion classes}, we can determinate which torsion classes appear for a $\gt$-structure on a unimodular almost Abelian Lie algebra:
\begin{corollary}\label{cor: unimodular-torsion classes}
Let $(\fg,A,\varphi)$ be a unimodular almost Abelian Lie algebra with $\gt$-structure, then $\varphi$ belongs to one of the following torsion classes
\begin{table}[h!]
\begin{tabular}{||c|c|c||}
\hline 
\text{Class} &  \text{Vanishing torsion} \quad & \text{Bracket relation}    \\ [0.5ex] \hline\hline
    $\cW=\{0\}$ & $\tau_0=0,\tau_1=0,\tau_2=0,\tau_3=0$  &  $A\in\su(3)$ \\ [0.5ex]
    $\cW_2$ & $\tau_0=0,\tau_1=0,\tau_3=0$ & $A\in \sym_+^0\oplus \su(3)$\\ [0.5ex]
    $\cW_3$ & $\tau_0=0,\tau_1=0,\tau_2=0$ & $A\in \sym_-^0\oplus\su(3)$\\ [0.5ex]
    $\cW_1\oplus\cW_3$ & $\tau_1=0,\tau_2=0$ & $A\in \sym_-^0\oplus \R\cdot J\oplus \su(3)$\\ [0.5ex]
    $\cW_2\oplus\cW_3$ & $\tau_0=0,\tau_1=0$ & $A\in \sym_+^0\oplus \sym_-^0\oplus \su(3)$ \\ [0.5ex]
    $\cW_1\oplus\cW_2\oplus\cW_3$ & $\tau_1=0$ &  $A\in \sym_+^0\oplus \sym_-^0\oplus \R\cdot J\oplus\su(3)$ \\ [0.5ex]
    $\cW_2\oplus\cW_3\oplus\cW_4$ & $\tau_0=0$ & $A\in \sym^0_+\oplus \sym^0_-\oplus\fm\oplus \su(3)$ \\ [0.5ex]
    $\cW_1\oplus\cW_2\oplus\cW_3\oplus\cW_4$ & \text{No vanishing condition} & $A\in  \sym^0_+\oplus \sym^0_-\oplus \R\cdot J\oplus\fm\oplus \su(3)$ \\ [1ex] \hline
\end{tabular}
\caption{Torsion classes of $(\fg,A,\varphi)$ unimodular}
\end{table}
\end{corollary}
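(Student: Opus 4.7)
The plan is to first characterise unimodularity of $\fg$ as a purely algebraic condition on the Lie bracket $A$, and then to intersect Table~\ref{tb:torsion_classes_introduction} with the resulting constraint. From the almost Abelian bracket~\eqref{eq: almost_abelian_Lie_bracket}, the operator $\ad(e_7)$ acts as $A$ on $\fh$ and sends $e_7$ to $0$, so $\tr(\ad(e_7))=\tr(A)$; for $u\in\fh$ the operator $\ad(u)$ sends $e_7\mapsto -Au\in\fh$ and vanishes on $\fh$, so $\tr(\ad(u))=0$ automatically. Hence $\fg$ is unimodular if and only if $\tr(A)=0$.

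Next I would inspect the $\SU(3)$--decomposition~\eqref{eq: splitting.gl(6)}: each of $\sym_+^0$, $\R\cdot J$, $\su(3)$ and $\fm$ consists of traceless endomorphisms (the first by the explicit condition $\tr=0$, and the last three because they sit inside $\so(6)$), and $\sym_-^0$ is also traceless since the anticommutation $\{J,A\}=0$ forces the diagonal $2\times 2$ $J$-blocks of $A$ to be traceless. Consequently the only summand of~\eqref{eq: splitting.gl(6)} that carries a trace is $\R\cdot I_6$, so unimodularity is equivalent to $A$ lying in the complement
$$
\sym_+^0\oplus\sym_-^0\oplus\R\cdot J\oplus\su(3)\oplus\fm.
$$

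Finally, I would restrict Table~\ref{tb:torsion_classes_introduction} to this subspace. Four of the twelve rows collapse onto strictly smaller classes already in the table, namely
$$
\cW_4\to\{0\},\quad \cW_2\oplus\cW_4\to\cW_2,\quad \cW_3\oplus\cW_4\to\cW_3,\quad \cW_1\oplus\cW_3\oplus\cW_4\to\cW_1\oplus\cW_3,
$$
while the two rows $\cW_2\oplus\cW_3\oplus\cW_4$ and $\cW_1\oplus\cW_2\oplus\cW_3\oplus\cW_4$ keep their torsion class but lose the $\R\cdot I_6$ summand, so $\sym(\R^6)$ is replaced by $\sym_+^0\oplus\sym_-^0$ and $\gl(\R^6)$ by the five-summand space above. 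The remaining eight rows are precisely those listed in the corollary.

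The only verification of substance is that no genuinely new torsion class appears under unimodularity. This is immediate from~\eqref{eq: torsion forms vanish conditions}: once $\tr(A)=0$ those conditions only involve $\tr(JA)$, $C_-(A)$ and $S_\pm(A)$, so the four rows containing an $\R\cdot I_6$ summand cannot contribute any distinguishing $\cW_4$-type datum of their own and must coincide with the smaller classes listed above, which makes the bookkeeping clean.
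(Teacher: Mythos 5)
Your proposal is correct and follows essentially the same route the paper intends: identify unimodularity with $\tr(A)=0$ (equivalently, the vanishing of the $\R\cdot I_6$ component in the splitting \eqref{eq: splitting.gl(6)}), and then specialise Table \ref{tb:torsion_classes_introduction} via the vanishing conditions \eqref{eq: torsion forms vanish conditions} of Theorem \ref{thm: 16-torsion classes}. The bookkeeping of which rows collapse and which merely lose the $\R\cdot I_6$ summand matches the corollary exactly.
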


\begin{example}\label{ex: unimodular_example}
  Consider the almost Abelian Lie algebra $(\fg,A)$ with the $\gt$-structure \eqref{coclosed_almost_abelian_G2} and 
  $$A=\left(\begin{array}{c|cccc}
      0_{2\times 2} & & & &  \\
     \hline
        & 0 & 0 & -1 & 0\\
        & 0 & 0 & 0 & 1\\
        & 1 & 0 & 0 & 0\\
        & 0 & -1 & 0 & 0
  \end{array}\right).
  $$
  By Proposition \ref{torsion_forms}, we have 
  \begin{align*}
      \tau_0=0, \quad \tau_1=\frac13 e^2, \quad \tau_2=\frac{2}{3}\left(e^{36}+e^{45}-2e^{17}\right) \qandq 
      \jmath(\tau_3)=4(e^1\otimes e^7+e^7\otimes e^1).
  \end{align*}
  Moreover, $\ricci_A=0$ and the associated connected and simply connected Lie group $G_A=\R^6\times_\mu \R$, where $\mu:\R\rightarrow \R^6$ satisfies $d(\mu(t))_0=e^{tA}$. It admits a discrete subgroup $\Gamma_a =\exp(\Z^6\times_\mu a\Z)$ ( for $a\in\{\frac{\pi}{2},\pi,\frac{3\pi}{2},2\pi\}$), such that $(\Gamma_a\backslash G_A, \varphi)$ is a flat, compact solvmanifold with invariant $\gt$-structure with torsion in $\cW_2\oplus\cW_3\oplus\cW_4$.
\end{example}

\section{Divergence of the full torsion tensor}\label{divergence_Laplacian}

In this section, we compute the divergence of the full torsion tensor of $\varphi$ in terms of Lie algebra data $(\fg,A)$. It yields a characterisation of the invariant harmonic $\gt$-structures on almost Abelian Lie groups. Recall that the  Levi-Civita connection is given by the left-invariant metric of $(\fg,A)$ \cite{milnor1976curvatures}*{Lemma 5.5}, which is given by
	\begin{equation}\label{eq: LC_connection}
	    \nabla_7e_7=0, \quad \nabla_ie_7=-S(A)(e_i), \quad \nabla_7e_i=C(A)(e_i) \qandq \nabla_ie_j=\langle S(A)(e_i),e_j\rangle e_7
	\end{equation}
	where $i,j=1,...,6$. Thus, we have:

\begin{theorem}\label{divergence_vector}
    Let $(\fg,A,\varphi)$ be an almost Abelian Lie algebra with $\gt$-structure, then the divergence of the full torsion tensor \eqref{eq: torsion_tensor_A} is 
    \begin{equation}\label{eq: divergence_of_T_1-form}
        \diver T=-\frac{1}{2}\tr(A)J^\ast\alpha(A)+\frac{1}{2}\theta(C(A))J^\ast\alpha(A)-\frac{1}{2}\tr(A)\tr(JA)e^7.
    \end{equation}
    Furthermore, $\varphi$ is harmonic if, and only if, $A\in \gl(\R^6)$ satisfies 
    \begin{equation}\label{eq: divergence condition}
        \tr(JA)\tr(A)=0 \qandq -\tr(A)J^\ast\alpha(A)+\theta(C(A))(J^\ast\alpha(A))=0,
    \end{equation}
    with $\alpha(A)$ defined in \eqref{alpha_form}.
\end{theorem}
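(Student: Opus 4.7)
The plan is to compute $\diver T$ directly from the explicit expression \eqref{eq: torsion_tensor_A} using the Levi-Civita connection \eqref{eq: LC_connection}. Since every component $T_{ab}$ of $T$ in the left-invariant orthonormal frame $\{e_1,\ldots,e_7\}$ is a constant, $\nabla_k T_{ab}=-\Gamma^l_{ka}T_{lb}-\Gamma^l_{kb}T_{al}$ and the divergence decomposes as
\begin{align*}
(\diver T)_j \;=\; -\Bigl(\textstyle\sum_k \Gamma^l_{kk}\Bigr)T_{lj}\;-\;\sum_{k,l}\Gamma^l_{kj}T_{kl}.
\end{align*}
The first (``trace'') contribution is immediate: \eqref{eq: LC_connection} gives $\sum_k \nabla_{e_k}e_k=\tr(A)\,e_7$, so this piece equals $-\tr(A)\,T_{7j}$, and inspecting \eqref{eq: torsion_tensor_A} shows it already supplies exactly the terms $-\tfrac{1}{2}\tr(A)\tr(JA)\,e^7$ and $\tfrac{1}{2}\tr(A)(J\alpha(A)^\sharp)^\flat$ appearing in the claimed formula.

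The second (``pairing'') contribution needs a case analysis. From \eqref{eq: LC_connection} the only nonzero Christoffel symbols are $\Gamma^l_{k7}=-S(A)^l_k$, $\Gamma^l_{7m}=C(A)^l_m$ and $\Gamma^7_{km}=S(A)_{km}$, with $k,l,m\in\{1,\ldots,6\}$. For $j=7$ the pairing collapses to the Frobenius trace $\sum_{k,l=1}^{6}S(A)_{lk}T_{kl}$ involving the $6\times 6$ block $M=[J,S(A)]+[J,C(A)]+JA^t+AJ$ given by \eqref{eq: torsion_tensor_A}; splitting $A=S(A)+C(A)$ inside $\tr(S(A)\,M)$ and applying cyclicity of the trace together with the identity $\tr(J\sigma^2)=0$ for symmetric $\sigma$ (immediate from $J^t=-J$), every term cancels. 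For $j=m\le 6$ the $\Gamma^7_{km}$-piece vanishes because $T_{k7}=0$, and only the $\Gamma^l_{7m}$-piece survives; skew-symmetry $C(A)^t=-C(A)$ then converts it into $-\tfrac{1}{2}(C(A)\,J\alpha(A)^\sharp)_m$.

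Reassembling the two contributions and translating back to 1-form language via the identifications $(J^\ast\alpha)^\sharp=-J\alpha^\sharp$ and $(\theta(C(A))J^\ast\alpha)^\sharp=-C(A)\,J\alpha^\sharp$ produces \eqref{eq: divergence_of_T_1-form}. The characterisation \eqref{eq: divergence condition} of harmonicity then follows by imposing $\diver T=0$ and separating the $e^7$-component from the $\mathfrak{h}^\ast$-component. The only nontrivial algebraic step I anticipate is the cancellation in the $j=7$ case: after sorting $\tr(S(A)\,M)$ term by term, one must verify that the contributions coming from $[J,C(A)]$, $JA^t$ and $AJ$ compensate each other modulo cyclic rearrangements of the trace, leaving $0$; everything else amounts to bookkeeping of Christoffel symbols against the block structure of $T$.
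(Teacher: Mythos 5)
Your proposal is correct and follows essentially the same route as the paper: a direct frame computation of $\nabla_iT_{ij}$ using the constancy of the components of $T$ and the Levi--Civita formulas \eqref{eq: LC_connection}, with the ``trace'' and ``pairing'' contributions corresponding exactly to the paper's two terms $-T(\nabla_ie_i,\cdot)-T(e_i,\nabla_i\cdot)$, and the same key cancellation $\langle [J,S(A)]+[J,C(A)]+JA^t+AJ,\,S(A)\rangle=0$ in the $e^7$-component. The only difference is organisational bookkeeping, not substance.
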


\begin{proof}
By definition $\diver T=\sum_{j=1}^7(\diver T)_je^j=\sum_{i,j=1}^7\nabla_iT_{ij}e^j$, thus using \eqref{eq: torsion_tensor_A} and  \eqref{eq: LC_connection}, we have
	\begin{align*}
		\diver T_7=&\sum_{i=1}^7\nabla_iT_{i7}=- \sum_{i=1}^7T(\nabla_ie_i,e_7)+T(e_i,\nabla_ie_7)\\
		          =&-\sum_{i=1}^6\langle S(A)(e_i),e_i\rangle T_{77}-T(e_i,S(A)(e_i))\\		          =&-\frac{1}{2}\tr(S(A))\tr(JA)+\frac{1}{2}\langle [J,S(A)]+[J,C(A)]+JA^t+AJ,S(A)\rangle\\
		          =&-\frac{1}{2}\tr(S(A))\tr(JA),
	\end{align*}
    since $\langle [J,S(A)],S(A)\rangle=-\langle J,[S(A),S(A)]\rangle=0$ and  $[J,C(A)]+JA^t+AJ$ is skew-symmetric. Now,	for $k\neq 7$, we have
	\begin{align*}
		\diver T_k=&\sum_{i=1}^7\nabla_ iT_{ik}=-\sum_{i=1}^7T(\nabla_ie_i,e_k)+T(e_i,\nabla_ie_k)\\
		          =&-T(e_7,C(A)(e_k))-\sum_{i=1}^6\langle S(A)(e_i)e_i\rangle T_{7k}+\langle S(A)(e_i),e_k\rangle T_{i7}\\
		          =&\frac{1}{2}\langle J\alpha(A)^\sharp,C(A)(e_k)\rangle+\frac{1}{2}\tr(S(A))\langle J\alpha(A)^\sharp,e_k\rangle\\
		      =&-\frac{1}{2}\alpha(A)(JC(A)(e_k))-\frac{1}{2}\tr(S(A))\alpha(A)(Je_k)
	\end{align*}
\end{proof}

\begin{remark}
The $1$-form \eqref{eq: divergence_of_T_1-form} can be rewritten as a vector
$$
\diver T^\sharp=\frac{1}{2}\tr(A)J(\alpha(A)^\sharp)-\frac{1}{2}C(A)J(\alpha(A)^\sharp)-\frac{1}{2}\tr(A)\tr(JA)e_7.
$$
Therefore, the divergence free condition \eqref{eq: divergence condition} can be rewritten as:
\begin{equation}\label{eq: vector_divergence condition}
  \tr(A)\tr(JA)=0 \qandq (\tr(A)J-C(A)J)(\alpha(A)^\sharp)=0.
  \end{equation}
In particular, since $\langle C(A)J(\alpha(A)^\sharp),J(\alpha(A)^\sharp)\rangle=0$, the harmonic condition \eqref{eq: vector_divergence condition} can be rewritten as
\begin{equation}
   \tr(A)\tr(JA)=0, \quad \tr(A)\alpha(A)=0 \qandq C(A)J\alpha(A)^\sharp=0
\end{equation}
\end{remark}
\begin{corollary}
Let $(\fg,A,\varphi)$ be a unimodular almost Abelian Lie algebra with $\gt$-structure, thus $\varphi$ is harmonic if, and only if, $A$ satisfies 
    \begin{equation}\label{eq: unimodular_divergence condition}
    C(A)J\alpha(A)^\sharp=0,
    \end{equation}
    equivalently, $(C(A))_{ln}\rho^-_{nij}[J,C(A)]_{ij}=0$ for each $ l\in\{1,...,6\}$.
\end{corollary}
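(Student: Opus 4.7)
The plan is to apply Theorem~\ref{divergence_vector} (in its vector reformulation \eqref{eq: vector_divergence condition}) under the unimodular hypothesis and then translate the resulting matrix identity into indices via Lemma~\ref{alpha_representation}. The whole argument is essentially a direct specialisation.

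The first step will be to identify what unimodularity imposes on the bracket $A$. From \eqref{eq: almost_abelian_Lie_bracket} one has $\ad(e_7)|_{\fh}=A$, $\ad(e_7)(e_7)=0$, and $\ad(u) = -(Au)\otimes e^7$ for $u\in \fh$; the matrix of the latter has its only nonzero column supported in $\fh$, hence trace zero. Therefore the unimodular condition $\tr(\ad u)=0$ for all $u\in\fg$ reduces to the single equation $\tr(A)=0$. Substituting $\tr(A)=0$ into \eqref{eq: vector_divergence condition}, the scalar equation $\tr(A)\tr(JA)=0$ becomes vacuous and the vector equation $(\tr(A)J-C(A)J)\alpha(A)^\sharp=0$ collapses to $C(A)J\alpha(A)^\sharp=0$, which is precisely \eqref{eq: unimodular_divergence condition}.

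For the equivalent coordinate expression, I will invoke Lemma~\ref{alpha_representation} to write $\alpha(A)=\tfrac12 [J,C(A)]\lrcorner \rho_+$, so that in the orthonormal basis $e_1,\dots,e_6$ the dual vector has components $(\alpha(A)^\sharp)^n=\tfrac12 [J,C(A)]_{ij}\rho^+_{ijn}$. The identity $\rho^+_{ijp}h^{pq}\omega_{qk}=\rho^-_{ijk}$ from Lemma~\ref{SU3_identities} reads, in the orthonormal basis, $J^n_m\rho^+_{ijm}=\rho^-_{ijn}$, so $(J\alpha(A)^\sharp)^n=\tfrac12 [J,C(A)]_{ij}\rho^-_{ijn}$. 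Contracting with $C(A)$ and using the total antisymmetry $\rho^-_{ijn}=\rho^-_{nij}$ yields the stated equivalent form $C(A)_{ln}\rho^-_{nij}[J,C(A)]_{ij}=0$ for each $l\in\{1,\dots,6\}$.

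The only real obstacle is clerical: identifying unimodularity with $\tr(A)=0$ is the lone conceptual point, and the remaining steps are direct substitution and index manipulation via the $\SU(3)$-identities already displayed in Section~2, with no new ideas required.
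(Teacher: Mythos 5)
Your argument is correct and is exactly the route the paper intends (the paper gives no separate proof, the corollary being an immediate specialisation of Theorem~\ref{divergence_vector} in the form \eqref{eq: vector_divergence condition}): unimodularity of $(\fg,A)$ reduces to $\tr(A)=0$ since $\ad(u)$ for $u\in\fh$ is automatically traceless, and then the two conditions in \eqref{eq: vector_divergence condition} collapse to $C(A)J\alpha(A)^\sharp=0$. The index form follows from Lemma~\ref{alpha_representation} together with $\rho^+_{ijp}h^{pq}\omega_{qk}=\rho^-_{ijk}$ from Lemma~\ref{SU3_identities}, just as you carry out.
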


Using equation \eqref{eq: divergence condition}, we can see which torsion classes from table \ref{tb:torsion_classes_introduction}  are harmonic. 

\begin{theorem}\label{thm:harmonic_torsion_classes}
 Let $(\fg,A,\varphi)$ be an almost Abelian Lie algebra with $\gt$-structure, thereby, $\varphi$ is harmonic if it belongs to one of the following torsion classes:
\begin{eqnarray*}
     \{0\}, \quad \cW_2, \quad \cW_3, \quad \cW_4,\\ \cW_1\oplus\cW_3, \quad \cW_2\oplus\cW_4, \quad \cW_3\oplus\cW_4, \\ \cW_2\oplus\cW_3, \quad \cW_1\oplus\cW_2\oplus\cW_3. 
\end{eqnarray*}
 Further, if $\varphi$ is of type $\cW_1\oplus\cW_3\oplus\cW_4$ and $\diver T=0$, then $\varphi$ is of type $\cW_1\oplus\cW_3$ or $\cW_3\oplus\cW_4$.
\end{theorem}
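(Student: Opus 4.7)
The plan is to reduce the harmonic condition of Theorem \ref{divergence_vector} to simple trace conditions on $A$, after observing that $\alpha(A)$ vanishes on every summand in the decompositions listed in Table \ref{tb:torsion_classes_introduction} except $\fm$. First I would record that, by Lemma \ref{alpha_representation} and Corollary \ref{cor: vanishing alpha}, the identity $\alpha(A)=0$ is equivalent to $C_-(A)=0$, i.e.\ to the absence of the $\fm$-component in the $\SU(3)$-decomposition of $A$. Examining the nine torsion classes in the statement (and the exceptional class $\cW_1\oplus\cW_3\oplus\cW_4$) against Table \ref{tb:torsion_classes_introduction}, none of them contains $\fm$ as a summand. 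Hence $\alpha(A)=0$ in all these cases, and the second equation of \eqref{eq: divergence condition} is automatically satisfied.

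It remains to check the scalar condition $\tr(JA)\tr(A)=0$. I would group the nine classes by whether the $\cW_1$ or the $\cW_4$ summand is absent: by Theorem \ref{thm: 16-torsion classes}, $\tau_0=0$ is equivalent to $\tr(JA)=0$, and $\tau_1=0$ combined with $C_-(A)=0$ forces $\tr(A)=0$. Running through the list, each of the nine classes lacks at least one of the $\R\cdot J$ summand (i.e.\ $\cW_1$ is absent) or the $\R\cdot I_6$ summand (i.e.\ $\cW_4$ is absent), so at least one of the two traces vanishes in each case and \eqref{eq: divergence condition} holds.

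For the remaining case $\cW_1\oplus\cW_3\oplus\cW_4$, Table \ref{tb:torsion_classes_introduction} gives a decomposition $A=\lambda I_6+B+\mu J+C$ with $B\in\sym_-^0(\R^6)$, $C\in\su(3)$ and $\lambda,\mu\in\R$. Using $\tr(J)=0$, $\tr(JB)=0$ (since $\{J,B\}=0$ forces $\tr(JB)=-\tr(JB)$), $\tr(JC)=0$ (by definition of $\su(3)$) and $\tr(J^2)=-6$, I compute $\tr(A)=6\lambda$ and $\tr(JA)=-6\mu$. Because $C_-(A)=0$, we still have $\alpha(A)=0$, so the harmonic condition collapses to $\lambda\mu=0$. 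If $\lambda=0$ then $A\in\sym_-^0\oplus\R\cdot J\oplus\su(3)$, yielding class $\cW_1\oplus\cW_3$; if $\mu=0$ then $A\in\R\cdot I_6\oplus\sym_-^0\oplus\su(3)$, yielding class $\cW_3\oplus\cW_4$.

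I expect no serious obstacle: once Step 1 eliminates the $\alpha(A)$-dependent term, the argument is essentially bookkeeping. The only subtlety is being consistent about which $\SU(3)$-summands appear in each torsion class, but this is read off directly from Table \ref{tb:torsion_classes_introduction}, and the trace identities involving $J$ on each summand are straightforward consequences of the defining relations of $\sym_\pm^0$ and $\su(3)$.
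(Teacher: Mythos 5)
Your proposal is correct and follows exactly the route the paper intends: the paper gives no separate proof of this theorem but derives it by checking the divergence-free condition \eqref{eq: divergence condition} of Theorem \ref{divergence_vector} against the bracket relations of Table \ref{tb:torsion_classes_introduction}, which is precisely your argument (absence of the $\fm$-summand kills $\alpha(A)$ via Corollary \ref{cor: vanishing alpha}, and absence of the $\R\cdot I_6$ or $\R\cdot J$ summand kills $\tr(A)$ or $\tr(JA)$). Your treatment of the $\cW_1\oplus\cW_3\oplus\cW_4$ case, reducing to $\lambda\mu=0$ and identifying the two resulting subclasses, is likewise the intended bookkeeping.
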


Next, we illustrate   examples of $(\fg,A,\varphi)$ with torsion in the class $\cW_2\oplus\cW_3\oplus\cW_4$ with $\diver T=0$ and non-vanishing divergence torsion tensor:

\begin{example}[$\gt$-structure with torsion in $\cW_2\oplus\cW_3\oplus\cW_4$] Consider the two following examples:
\begin{enumerate}
    \item[(i)] From example \ref{ex: unimodular_example}, we have $\alpha(A)^\sharp=4e_2$ and $\tr(A)=0$, since $J\alpha(A)^\sharp\in \ker A$, by \eqref{eq: vector_divergence condition}, we have $\diver T=0$.
    \item[(ii)]  Consider the almost Abelian Lie algebra $(\fg,B)$ with the $\gt$-structure \eqref{coclosed_almost_abelian_G2} and 
  $$B=\left(\begin{array}{cc|cccc}
     1 & 0 & & & &  \\
     0 & 1 & & & & \\
     \hline
    &    & 0 & 0 & -1 & 0\\
     &   & 0 & 0 & 0 & 1\\
      &  & 1 & 0 & 0 & 0\\
       & & 0 & -1 & 0 & 0
  \end{array}\right).
  $$ Thus, $\alpha(B)^\sharp=4e_2$ and, using equation \eqref{eq: vector_divergence condition}, we see that  $\diver T^\sharp=\frac{\tr(B)}{2}J(\alpha(B)^\sharp)=-4e_1$, since $\tr(B)=2$ and $J\alpha^\sharp\in\ker( C(B))$.
\end{enumerate}
\end{example}

\begin{example}[$\gt$-structure with torsion in $\cW$]
Consider the almost Abelian Lie algebra $(\fg,D)$ with the $\gt$-structure \eqref{coclosed_almost_abelian_G2} and 
$$D=
\left(\begin{array}{cc|cccc}
     0 & 0 & & & &  \\
     0 & 0 & & & & \\
     \hline
    &    & 0 & -1 & -1 & 0\\
     &   & 1 & 0 & 0 & 1\\
      &  & 1 & 0 & 0 & -1\\
       & & 0 & -1 & 1 & 0
  \end{array}\right).
  $$ Since $J(\alpha(D)^\sharp)=-4e_1\in \ker(C(D))$ and $\tr(D)=0$, by \eqref{eq: vector_divergence condition} we have $\diver T^\sharp=0$.
\end{example}

\begin{remark}
 We recall that, the torsion classes $\{0\}$, $\cW_2$, $\cW_3$, $\cW_4$ and $\cW_2\oplus \cW_3$ are generically harmonic by \cite{Grigorian2019}*{Theorem 4.3}. Besides, since $\tau_0$ is constant for Lie groups, then the torsion classes $\cW_1\oplus\cW_3$ and $\cW_1\oplus\cW_2\oplus\cW_3$ also satisfy $\diver T=0$ (see \eqref{eq: Grigorian_divergence_free_cases}).  Therefore, the almost Abelian Lie algebras $(\fg,A,\varphi)$ whose $\gt$-structure has torsion in the classes  $\cW_2\oplus\cW_4$, and $\cW_3\oplus\cW_4$ are new examples of harmonic $\gt$-structures. However, these new examples $(\fg,A,\varphi)$ do not admit a lattice, since they are not unimodular (see Theorem \ref{thm: 16-torsion classes}). 
\end{remark}

\newpage
\bibliography{biblio.bib}




\end{document}